\DeclareMathOperator{\supp}{supp}
\DeclareMathOperator{\diam}{diam}
\DeclareMathOperator{\Id}{Id}
\DeclareMathOperator{\Tr}{Tr}
\newcommand{\per}{\mathrm{per}}
     \newcommand{\EE}{\mathbb{E}}
     \newcommand{\NN}{\mathbb{N}}
     \newcommand{\PP}{\mathbb{P}}
     \newcommand{\RR}{\mathbb{R}}
     \newcommand{\ZZ}{\mathbb{Z}}
\newcommand{\cF}{\mathcal{F}}
\newcommand{\cP}{\mathcal{P}}
\newcommand{\be}{\begin{equation}}
\newcommand{\ee}{\end{equation}}
\newcommand{\la}{\langle}
\newcommand{\ra}{\rangle}
\newcommand{\KKK}{S}
\newcommand{\SSS}{K}
\theoremstyle{plain}
\newtheorem{teo}{Teorem}
\newtheorem{theorem}[teo]{Theorem}
\newtheorem{proposition}[teo]{Proposition}
\newtheorem{lemma}[teo]{Lemma}
\theoremstyle{definition}
\newtheorem{definition}[teo]{Definition}
\theoremstyle{remark}
\newtheorem{remark}[teo]{Remark}
\newcommand{\change}{\leavevmode{\marginpar{\tiny%
$\hbox to 0mm{\hspace*{-0.5mm}$\leftarrow$\hss}%
\vcenter{\vrule depth 0.1mm height 0.1mm width \the\marginparwidth}%
\hbox to
0mm{\hss$\rightarrow$\hspace*{-0.5mm}}$\\\relax\raggedright }}}
\newcommand{\Hm}[1]{\leavevmode{\marginpar{\tiny%
$\hbox to 0mm{\hspace*{-0.5mm}$\leftarrow$\hss}%
\vcenter{\vrule depth 0.1mm height 0.1mm width \the\marginparwidth}%
\hbox to
0mm{\hss$\rightarrow$\hspace*{-0.5mm}}$\\\relax\raggedright #1}}}
\begin{document}

\title[Equality of Lifshitz and van Hove exponents]{Equality of Lifshitz and van Hove exponents on amenable Cayley graphs}
\author[T.~Antunovi\'c]{Ton\'ci Antunovi\'c}
\address{Department of Mathematics, UC Berkeley, Berkeley, CA 94720, USA}
\author[I.~Veseli\'c]{Ivan Veseli\'c}
\address{Emmy-Noether-Programme of the Deutsche Forschungsgemeinschaft\vspace*{-0.3cm} }
\address{\& Fakult\"at f\"ur Mathematik,\, 09107\, TU\, Chemnitz, Germany   }
\urladdr{www.tu-chemnitz.de/mathematik/schroedinger/members.php}

\keywords{amenable groups, Cayley graphs, random graphs, percolation, random operators, spectral graph theory, phase transition}
\subjclass[2000]{05C25 (Graphs and groups), 82B43 (Percolation), 05C80 (Random graphs), 37A30 (Ergodic theorems, spectral theory, Markov operators),
35P15 (Estimation of eigenvalues, upper and lower bounds)}

\begin{abstract}
We study the low energy asymptotics of periodic and random Laplace operators
on Cayley graphs of amenable, finitely generated groups.
For the periodic operator the asymptotics is characterised by the van Hove exponent or zeroth Novikov-Shubin invariant.
The random model we consider is given in terms of an adjacency Laplacian
on site or edge percolation subgraphs of the Cayley graph.
The asymptotic behaviour of the spectral distribution is exponential, characterised by the
Lifshitz exponent. We show that for the adjacency Laplacian the two invariants/exponents coincide.
The result holds also for more general symmetric transition operators.
For combinatorial Laplacians one has a 
different universal behaviour of the low energy asymptotics of the spectral distribution function,
which can be actually established on quasi-transitive graphs without an amenability assumption.
The latter result holds also for long range bond percolation models.
\end{abstract}

 \thanks{\jobname.tex; \today. To appear in \emph{Journal de Mathematiques Pures et Appliquees}.}
\maketitle

\section{Introduction} 
Operators on Euclidean space which are invariant under a group action
have a well defined \emph{integrated density of states} (IDS),
also known as the \emph{spectral distribution function}.
Prominent examples are Laplace and Schr\"odinger operators.
Their IDS exhibits a \emph{van Hove singularity} at the bottom of the spectrum.
This means that it vanishes polynomially as the energy parameter approaches the lowest spectral edge,
the exponent being equal to the space dimension divided by two.
The factor one half is due to the fact that the considered operators are
elliptic of second order.

The IDS can be defined also for operators having a more general
type of equivariance property, namely for ergodic operators.
Two prominent classes of such operators are random and almost periodic ones.
Among the pioneering works which have studied the IDS of such models are \cite{Pastur-71},
respectively \cite{Shubin-78}.

Several well-studied types of random operators on $L^2(\RR^d)$ and
$\ell^2(\ZZ^d)$  exhibit a Lifshitz tail at the bottom of the
spectrum, meaning that the IDS vanishes exponentially fast.
In particular, the spectral density is very sparse in this region
and spectral values are created only by extremely rare configurations of the randomness.
Hence such spectral edges are called \emph{fluctuation boundaries}.
%
%
In Euclidean space the Lifshitz exponent is quite universal. In particular, for Laplacians with a variety of
random i.i.d. non-negative perturbations it equals $d/2$,
cf.~for instance the survey \cite{KirschM-07} and the references therein.

Historically, physicists have introduced the IDS as a limit of
spectral distribution functions of finite volume operators.
For this approximation to converge, the underlying space or group needs to have
some amenability property.  However, for the purposes of the present
paper the approximation property is not relevant and we may rather consider the
IDS as given by a \emph{Shubin-Pastur trace formula} \eqref{e-IDS}.

In the present paper we want to analyse whether the Lifshitz
exponent equals the van Hove exponent for operators on more
general geometries as well. Of course, for this to hold a proper relation between the considered periodic and
random operator is necessary, in the sense that the random operator results
from its periodic counterpart  by addition of
stochastically independent, positive perturbations. The periodic
objects we study are Laplace operators on Cayley graphs. We
consider two different types of random perturbations thereof: the
adjacency and the combinatorial Laplacians on random subgraphs
generated by a subcritical percolation process. While the first
type of operators indeed shows a coincidence of van Hove and
Lifshitz exponents, the second ones exhibit a different type of
universal behaviour, the reason being, that the random
perturbation is not positive in this case.

Our motivation to study this question is threefold: firstly, to extend the
results of \cite{KloppN-03} and \cite{KirschM-06} concerning lattice bond percolation models;
secondly, to study the relation between van Hove and Lifshitz exponents, as
done at internal spectral edges of random perturbations of periodic Schr\"odinger operators
e.g.~in \cite{Klopp-99,KloppW-02}, and finally to clarify some of the links between
geometric $L^2$-invariants and the IDS, see
e.g.~\cite{Lueck-02,DodziukLPSV-06}. Note in particular that the
van Hove exponent equals  the Novikov-Shubin invariant of
order zero, cf.~\cite{NovikovSh-86,GromovS-91}. Our strategy of proof is coined after the one in
 \cite{KirschM-06}. The description of the asymptotic
behaviour of the IDS at spectral boundaries of random operators
plays a key role in the proof of Anderson localisation, see
e.g.~\cite{GerminetK-03}. For more background on the IDS of
percolation Hamiltonians on Cayley graphs see the discussion in
\cite{AntunovicV-b}.

In the next section we state our theorems. Thereafter, in Section
\ref{s-abstract-bounds} we present abstract upper and lower bounds
on the IDS. Section~\ref{s-eigenvalue-bounds} is devoted to
eigenvalue inequalities.  Section~\ref{s-polynomial-growth}
contains the proofs of the theorems in the case of adjacency
Laplacians on groups with polynomial growth and combinatorial
Laplacians on general quasi-transitive graphs. In Section~\ref{s-lamplighter}
we prove the statements concerning Lamplighter groups.
The last section is devoted to the extension of our results to some 
related models: we derive there the low energy  
spectral asymptotics of percolation Hamiltonians associated to
general symmetric transition operators
on discrete, finitely generated, amenable groups. Furthermore 
we study combinatorial Laplacians on long range edge percolation graphs,
and on an abstract ensemble of percolation graphs satisfying certain conditions.

\section{Definitions and results}
\label{s-results}
We describe the type of graphs, the percolation process and the operators we will be considering.

Let $\Gamma$ be a discrete, finitely generated group, $S$ a
finite, symmetric set of generators not containing the unit
element $\iota$ of $\Gamma$ and $G=(V,E)$ the associated Cayley
graph. It is $k$-regular with $k=|S|$. The ball around $\iota$ of
radius $n$ is denoted by $B(n)$ and its volume by $V(n)$. From
\cite{Bass-72,Gromov-81,vandenDriesW-84} it is known that either
there are $d \in \NN, a,b >0  $ such that $a\, n^{d} \leq V(n)
\leq b\, n^{d}$, in which case $\Gamma$ is called to be of
\emph{polynomial growth of order $d$}; or for every $d\in\NN$ and
every $b\in\RR$ there exist only finitely many integers $n$ such
that $V(n) \leq  b\, n^{d}$, in which case $\Gamma$ is called to
be of \emph{superpolynomial growth}. The growth type depends only
on the group and not on the choice of the set of generators used
to define the Cayley graph. Cayley graphs are a particular case of
\emph{quasi-transitive} graphs, i.e.~graphs whose vertex set decomposes under
the action of the automorphism group into finitely many orbits.
Most of our results are valid only for Cayley graphs. An exception
are the Theorems which concern the combinatorial Laplacian
(e.g.~Theorem \ref{thm: main Neumann}) which apply to general
quasi-transitive graphs with finite vertex degree.

Next we introduce site percolation on infinite, connected, quasi-transitive graphs.
For $p \in [0,1]$, let $\omega_x, x \in V$ be an i.i.d.~sequence of Bernoulli random variables each taking the
value $1$  with probability $p$ and the value $0$  with probability $1-p$.
The set of possible configurations $\omega =(\omega_x)_{x\in V}$ is denoted by $\Omega$
and the corresponding product probability measure with $\PP$.
We call $V(\omega):=\{x \in V\mid \omega_x =1\}$ the set of \emph{open sites}. The induced subgraph of $G$
with vertex set $V(\omega)$ is denoted by $G_\omega$ and called the \emph{percolation subgraph}
in the configuration $\omega$. The connected components of $G_\omega$ are called \emph{clusters}.
For a fixed vertex $o \in V$ we denote by $C_o(\omega)$ the connected component which contains it.
The bond percolation process is defined analogously. In this case the percolation subgraph
$G_\omega$ is the graph whose edge set $E(\omega)$ is the set of all $e\in E$ with $\omega_e =1$ and whose
vertex set $V(\omega)$ consist of all vertices in $V$ which are incident to an element of $E(\omega)$.
For both site and bond percolation there exists a \emph{critical parameter} $0<p_{c}\leq 1$
such that for $p<p_{c}$ there is no infinite cluster almost surely and for
$p>p_{c}$ there is an infinite cluster almost surely. The first case is called the \emph{subcritical phase}
and the second \emph{supercritical phase}. The theorems of this paper concern only the subcritical percolation phase.
We will denote the expectation with respect to  $\PP$ by $\EE \{  \dots\}$.

In the following we assume throughout that $G$ is an infinite, countable 
quasi-transitive graph with bounded vertex degree and that 
there exist a group of automorphisms acting freely and cofinitely on $G$. In particular
it may be a Cayley graph. Let $G'=(V',E')$ be an arbitrary
subgraph of  $G$, possibly $G$ itself. Note that even if $G$ is
regular, $G'$ need not be. We denote the degree of the vertex
$x\in V'$ in $G'$ by $\deg_{G'}(x)$. If two vertices $x,y\in V'$
are adjacent in the subgraph $G'$ we write  $y \sim_{G'} x$.

For $G$ and $G'$ as above we define the following operators on $\ell^2(G'):=\ell^{2}(V')$.
\begin{definition}\label{def: operators on graphs}
\begin{enumerate}[(a)]
\item The identity operator on $\ell^2(V')$ is denoted by $\Id$.
\item The \emph{degree operator} acts on $\varphi \in\ell^{2}(V')$ according to
 \[
 [D(G')\varphi](x) := \deg_{G'}(x)\varphi(x).
 \]
\item The \emph{adjacency operator} is defined as
 \[
 \displaystyle [A(G')\varphi](x) := \sum_{y \in V', y \sim_{G'} x}\varphi(y).
 \]
\item The \emph{combinatorial Laplacian} is defined as
 \[
 H^{\scriptscriptstyle N}(G') :=  D(G') - A(G').
 \]
\end{enumerate}
\end{definition}

If $G$ is a $k$-regular graph we define additionally:
\begin{definition}\label{def: operators on regular graphs}
\begin{enumerate}[(a)] \setcounter{enumi}{4}
\item The \emph{adjacency Laplacian}  on $G'$ is defined as
 \[
 H^{\scriptscriptstyle A}(G') := k \Id - A(G').
 \]
\item
The \emph{boundary potential} is the multiplication operator 
\[
W^{b.c.}(G')= k \Id -D(G')
\]
\item The \emph{Dirichlet Laplacian} is defined as
 \[
 H^{\scriptscriptstyle D}(G') := H^{\scriptscriptstyle A}(G')  +W^{b.c.}(G')= 2k \Id - D(G') - A(G').
 \]
\end{enumerate}
Note that $H^{\scriptscriptstyle N}(G') = H^{\scriptscriptstyle A}(G') - W^{b.c.}(G')$. Of course, it is
possible to define the operators (e) -- (g) also for non regular
graphs, but then there is no canonical choice for the value $k$
which would give them a geometric meaning.
\end{definition}
It follows that the quadratic forms of the combinatorial, Dirichlet, and adjacency Laplacian are given by
\begin{align}
\label{e-quadratic-forms}
\la  H^{\scriptscriptstyle N}(G')\phi, \phi\ra &= \sum_{(x,y)\in E'} |\phi(x)-\phi(y)|^2
\nonumber\\
\la  H^{\scriptscriptstyle D}(G')\phi, \phi\ra &= 2 \sum_{x \in V'}( k- \deg_{G'}(x))\, |\phi(x) |^2 + \sum_{(x,y)\in E'} |\phi(x)-\phi(y)|^2
\\ \nonumber
\la  H^{\scriptscriptstyle A}(G')\phi, \phi\ra &=  \sum_{x \in V'}( k- \deg_{G'}(x))\, |\phi(x) |^2 + \sum_{(x,y)\in E'} |\phi(x)-\phi(y)|^2
\end{align}
and satisfy $H^{\scriptscriptstyle N}(G') \le H^{\scriptscriptstyle A}(G')\le H^{\scriptscriptstyle D}(G')$ in the sense of quadratic forms.

\begin{remark}[Terminology]
If $G'=G$ and $G$ is regular then the operators $ H^{\scriptscriptstyle A},H^{\scriptscriptstyle N},H^{\scriptscriptstyle D}$
coincide and we denote them simply by $H$.  If $G$ is the Cayley graph of an amenable group
the spectral bottom of $H$ equals zero. Usually in the graph theory literature the adjacency
matrix and the combinatorial Laplacian are the objects of study. For the first operator one is (among others)
interested in the properties related to the upper edge of the spectrum,
whereas for the second operator one considers the low-lying spectrum. In order to be able to treat
both operators in parallel it is convenient to consider $H^{\scriptscriptstyle A}$ rather than $A$. Of course,
spectral properties of  $H^{\scriptscriptstyle A}$ directly translate to those of $A$.

Motivated by the Dirichlet-Neumann bracketing for Laplacians in
the continuum, in \cite{Simon-85b} the terminology of Neumann
$H^{\scriptscriptstyle N}$ and Dirichlet $H^{\scriptscriptstyle D}$ Laplacians was introduced. This is
the reason why we use the superscript $N$ for the combinatorial
Laplacian. While in the continuum the boundary conditions are
necessary to define a selfadjoint operator, in the discrete
setting they correspond to a boundary potential $W^{b.c.}$, which
is either added or subtracted to/from the Laplacian without
boundary term, i.e.~the adjacency Laplacian $H^{\scriptscriptstyle A}$. Note however,
that the term Neumann Laplacian is sometimes, e.g.~in
\cite{Chung-97}, used for a different operator. Likewise, the
operator $H^{\scriptscriptstyle A}$ is often called Dirichlet Laplacian, e.g.~in
\cite{ChungGY-00}, while in \cite{KirschM-06} it is called
Pseudo-Dirichlet Laplacian.
\end{remark}

Given a (site or bond) percolation subgraph $G_\omega\subset G$
we use the following abbreviations for operators on $\ell^2(V(\omega))$:
$\deg_\omega(x)= \deg_{G_\omega} (x), A_\omega = A(G_\omega),
H^{\scriptscriptstyle A}_\omega=H^{\scriptscriptstyle A}(G_\omega),H^{\scriptscriptstyle N}_\omega=H^{\scriptscriptstyle N}(G_\omega),H^{\scriptscriptstyle D}_\omega=H^{\scriptscriptstyle D}(G_\omega),
W^{b.c.}_\omega=W^{b.c.}(G_\omega)$. Any one of the  operators $H^{\scriptscriptstyle \#}_\omega, \#\in\{A,N,D\}$ will be
called a percolation Laplacian. If $G$ is a Cayley graph we consider all three types
$H^{\scriptscriptstyle A},H^{\scriptscriptstyle N},H^{\scriptscriptstyle D}$, while in the case of a quasi-transitive graph we will derive results only
for the combinatorial Laplacian $H^{\scriptscriptstyle N}$.

Next we define the IDS. Let $G$ be a quasi-transitive graph equipped with a subgroup $\Gamma$ of its
automorphism group which acts freely and cofinitely  on $G$. Denote by $\cF$ an arbitrary, but fixed
$\Gamma$-fundamental domain, i.e.~a subset of $G$, which contains exactly one element of each $\Gamma$-orbit.
The IDS of the random operator $(H^{\scriptscriptstyle \#}_\omega)_\omega$ may be defined by the following trace formula:
%
%
\begin{equation}\label{e-IDS}
 N^{\scriptscriptstyle \#}(E):= \frac{1}{|\cF|}\EE\big\{\Tr [\chi_\cF \, \chi_{]-\infty, E]}(H^{\scriptscriptstyle \#}_\omega)]\big\}.
\end{equation}
Here $\chi_\cF $ is understood to be a multiplication operator.
If $\Gamma$ acts transitively on $G$ the expression \eqref{e-IDS}
simplifies to $\EE\{ \la\delta_x, \chi_{]-\infty, E]}(H^{\scriptscriptstyle \#}_\omega)\delta_x\ra\}$,
where $x$ denotes an arbitrary vertex in $G$ and $\delta_x$ its characteristic function.
If moreover $p=1$, i.e.~ we consider the IDS of the Laplacian $H$ on $G$ itself,
the formula simplifies further to
$N_\per(E)=\la\delta_x, \chi_{]-\infty, E]}(H)\delta_x\ra$.
We denote by $N_\per$ the IDS of the periodic operator $H$, while $N^{\scriptscriptstyle \#}$
is reserved for the IDS of the random operator $H^{\scriptscriptstyle \#}$.

\begin{remark} \label{r-basicproperties}
Several properties of the random family $(H^{\scriptscriptstyle \#}_\omega)_\omega$
of operators play a role in the definition of the IDS. These hold for any
of the boundary types $\#\in \{A,N,D\}$. Firstly, $(H^{\scriptscriptstyle \#}_\omega)_\omega$
is a measurable family of operators in the sense of \cite{LenzPV-07} (which extends the
notion introduced in \cite{KirschM-82c}). Secondly, each operator is bounded,  selfadjoint
and non-negative.

If the group $\Gamma$ is amenable, it is possible to approximate the IDS by its analogs associated to
operators restricted to finite graphs along a F\o lner (van Hove) sequence. This has been shown
for periodic operators in \cite{DodziukMY,MathaiY-02} and for site percolation Hamiltonians in \cite{Veselic-05a}.
For bond percolation Hamiltonians the same proof applies. For bond percolation on the lattice 
$\ZZ^d$ these results
were proven in \cite{KirschM-06}.

A finitely generated, discrete group is amenable if and only if it contains an increasing F\o lner sequence,
i.e.~an increasing sequence of finite subsets $I_n\subset \Gamma$ such that
$$\lim_{j \to \infty} \, \frac{|I_{j}\,\triangle \,F \cdot I_{j}|}{|I_{j}|} = 0, \text{ \ for any finite } F \subset \Gamma.$$
Any increasing F\o lner sequence induces a  monotone exhaustion
$\Lambda_n, n\in \NN$ consisting of finite subsets $\Lambda_n$ of
the vertex set of $G$, such that if we denote by $H^{\scriptscriptstyle \#,n}_\omega$
the restriction of  $H^{\scriptscriptstyle \#}_\omega$ to $\ell^2(\Lambda_n \cap
V(\omega))$ the convergence
\begin{equation}
\label{e-appoximationIDS}
\lim_{n\to\infty}
\frac{1}{|\Lambda_n|} \, \Tr [ \chi_{]-\infty, E]}(H^{\scriptscriptstyle \#,n}_\omega)] =N^{\scriptscriptstyle \#}(E)
\end{equation}
holds for almost all $\omega$ and all continuity points $E$ of $N^{\scriptscriptstyle \#}$. 
Actually, for this convergence one has to assume that the F\o lner sequence $(I_j)_j$ is tempered,
cf.~the ergodic theorem in \cite{Lindenstrauss-01}. This is no loss of generality, since every amenable group contains a tempered F\o lner sequence.
Note that
since $H^{\scriptscriptstyle \#,n}_\omega$ is a finite dimensional operator its spectrum consists
entirely of eigenvalues
and hence
$\Tr [ \chi_{]-\infty, E]}(H^{\scriptscriptstyle \#,n}_\omega)]$ equals the number of eigenvalues of
$H^{\scriptscriptstyle \#,n}_\omega$ not exceeding $E$. Using the inequalities for the quadratic forms \eqref{e-quadratic-forms}
and Weyl's monotonicity principle it follows that
$\Tr [ \chi_{]-\infty, E]}(H^{\scriptscriptstyle N,n}_\omega)]\ge \Tr [ \chi_{]-\infty, E]}(H^{\scriptscriptstyle A,n}_\omega)]
\ge \Tr [ \chi_{]-\infty, E]}(H^{\scriptscriptstyle D,n}_\omega)]$. Passing to the limit $n \to \infty$
one obtains $N^{\scriptscriptstyle N} \ge N^{\scriptscriptstyle A} \ge N^{\scriptscriptstyle D}$.
Recently it turned out that the statement in \eqref{e-appoximationIDS}
can be strenghtened: in \cite{LenzV} it was shown that the convergence holds \emph{uniformly}
with respect to the energy parameter $E$.

There are other important properties of $(H^{\scriptscriptstyle \#}_\omega)_\omega$ which are appropriate to mention here
although they are not necessary for the formulation of our definitions or theorems.
The spectrum of $H^{\scriptscriptstyle \#}_\omega$ is almost surely $\omega$-independent, cf.~\cite{LenzPV-07,Veselic-05a}.
We denote it by $\Sigma^{\scriptscriptstyle \#}$ in the sequel. The same holds for the measure-theoretic components of the spectrum.
The topological support of the measure whose distribution function is $N^{\scriptscriptstyle \#}$ coincides with $\Sigma^{\scriptscriptstyle \#}$.
Using the same arguments as in \cite{KirschM-06} one can show that $\Sigma^{\scriptscriptstyle \#}\supset \sigma(H)$.
The IDS of percolation Hamiltonians has a rich set of discontinuities \cite{ChayesCFST-86},
and a characterisation of this set is given in \cite{Veselic-05b}. It is also possible to extend the percolation Hamiltonians
to the removed vertices $V\setminus V(\omega)$ by a constant. This is just a matter of convention and does not
alter the results essentially.  For a broader discussion of the above facts see \cite{AntunovicV-b}.
\end{remark}

The next statement characterises the asymptotic behaviour of the IDS of the periodic Laplacian $H$
at the spectral bottom and can be inferred from \cite{Lueck-02,Varopoulos-87}.
For groups of polynomial growth it exhibits a van Hove singularity, while
in the case of superpolynomial growth one encounters a  different type of asymptotics
which may be interpreted as corresponding to a van Hove exponent equal to infinity.

\begin{theorem}\label{thm: full graph}
Let $\Gamma$ be an infinite,  finitely generated,  amenable group, $H$ the Laplace operator on a Cayley graph of $\Gamma$
and $N_\per$ the associated IDS. If $\Gamma$ has polynomial growth of order $d$ then
\begin{align}
\label{e-vanHove}
    \lim_{E \searrow 0} \frac{\ln N_\per(E)}{\ln  E} &= \frac{d}{2}.
\intertext{and if $\Gamma$ has superpolynomial growth then}
    \lim_{E \searrow 0} \frac{\ln N_\per(E)}{\ln  E} &= \infty.
\end{align}
\end{theorem}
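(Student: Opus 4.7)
My plan is to reduce the statement to the classical heat-kernel asymptotics on Cayley graphs via a soft Tauberian argument. Since $\Gamma$ acts transitively on its Cayley graph and $p=1$, the IDS is $N_\per(E)=\la\delta_\iota,\chi_{]-\infty,E]}(H)\delta_\iota\ra$, so its Laplace-Stieltjes transform equals the on-diagonal heat kernel:
\[
\int_0^\infty e^{-tE}\,dN_\per(E)=\la\delta_\iota,e^{-tH}\delta_\iota\ra=:p_t(\iota,\iota),\qquad t>0.
\]
The classical two-sided bounds of Varopoulos and Hebisch--Saloff-Coste on Cayley graphs state that for $\Gamma$ of polynomial growth of order $d$ there are constants $c_1,c_2>0$ with $c_1 t^{-d/2}\le p_t(\iota,\iota)\le c_2 t^{-d/2}$ for $t\ge 1$, whereas for $\Gamma$ of superpolynomial growth one has $\lim_{t\to\infty}\ln p_t(\iota,\iota)/\ln t = -\infty$.

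The Tauberian conversion from $t\to\infty$ to $E\searrow 0$ splits into two inequalities. The easy one is Chebyshev-type: restricting the Laplace integral to $[0,1/t]$ gives $p_t(\iota,\iota)\ge e^{-1}N_\per(1/t)$, hence $N_\per(E)\le e\,p_{1/E}(\iota,\iota)$. In the polynomial case this immediately yields $\liminf_{E\searrow 0}\ln N_\per(E)/\ln E\ge d/2$, and in the superpolynomial case it yields the $+\infty$ statement directly, since the right hand side can be made smaller than $E^{\alpha}$ for any $\alpha>0$. For the matching direction in the polynomial case, I would split $p_t = \int_0^{M/t}+\int_{M/t}^\infty$, bound $N_\per\le 1$ on the tail to obtain $e^{-M}$ there, and choose $M=M(t)=O(\ln t)$ so that $e^{-M}$ undercuts the heat-kernel lower bound $c_1 t^{-d/2}$; this forces $N_\per(E)\ge c\,E^{d/2}(\ln(1/E))^{-d/2}$, whose logarithmic rate is still $d/2$.

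The main obstacle is not the Tauberian step — which is routine once phrased in logarithmic form — but rather ensuring that the correct heat-kernel inputs are available in the generality required. Varopoulos's upper bound applies to all amenable Cayley graphs, but the matching on-diagonal lower bound needed in the polynomial case requires the sharper Hebisch--Saloff-Coste estimate and the subtle fact that a group of polynomial growth is virtually nilpotent, hence admits a doubling volume and a Poincar\'e inequality. In the superpolynomial case only the Chebyshev direction is needed, so no lower bound on $p_t$ enters, and the conclusion is in fact insensitive to any further structure of $\Gamma$.
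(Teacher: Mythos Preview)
Your proposal is correct and follows essentially the same approach as the paper: both reduce the statement to the known return-probability/heat-kernel asymptotics of Varopoulos (and, for the lower bound in the polynomial case, the matching Hebisch--Saloff-Coste type estimate) and then perform a soft Tauberian conversion. The only cosmetic difference is that the paper works with the discrete-time moments $\mathbb{P}_o(X_n=o)=\int t^{n}\,dN_{\frac{1}{k}A}(t)$ of the scaled adjacency operator and cites L\"uck's Lemma~2.46 for the Tauberian step, whereas you use the continuous-time Laplace transform $p_t(\iota,\iota)=\int e^{-tE}\,dN_{\per}(E)$ and spell out the Chebyshev/splitting argument explicitly; the two transforms are interchangeable here and the logarithmic conclusions are identical.
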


Next we state our result about the low energy asymptotics of $(H^{\scriptscriptstyle A}_\omega)_\omega$ and $(H^{\scriptscriptstyle D}_\omega)_\omega$
and compare it with the asymptotic behaviour of the Laplacian $H$ on the full Cayley graph.
Here and in the sequel we restrict ourselves to the subcritical phase of (site or bond) percolation,
i.e.~we consider a percolation parameter $p < p_c$.
The asymptotic behaviour of the IDS of the adjacency and the Dirichlet percolation Laplacian on a Cayley graph
at low energies is as follows:

\begin{theorem}\label{thm: main Dirichlet}
Let $G$ be a $k$-regular Cayley graph of an amenable, finitely generated group $\Gamma$. Let $(H^{\scriptscriptstyle A}_\omega)_\omega$ and $(H^{\scriptscriptstyle D}_\omega)_\omega$
be the adjacency, respectively the Dirichlet percolation Laplacian for subcritical site or bond percolation on $G$.

Then there is a positive constant $a_p$ such that for all positive $E$ small enough we have
\begin{equation}\label{eq: main dirichlet arbitrary}
N^{\scriptscriptstyle D}(E) \leq N^{\scriptscriptstyle A}(E) \leq \exp\bigg(-\frac{a_p}{2} V\Big(\frac{1}{8\sqrt 2 k} E^{-1/2}-1\Big)\bigg).
\end{equation}

Assume that $G$ has polynomial growth and $V(n) \sim n^{d}$. Then there are positive constants
$\alpha_{D}^{+}(p)$ and $\alpha_{D}^{-}(p)$ such that for all positive $E$ small enough
\begin{equation}\label{eq: main dirichlet poly}
e^{-\alpha_{D}^{-}(p)E^{-d/2}} \leq N^{\scriptscriptstyle D}(E) \leq N^{\scriptscriptstyle A}(E) \leq e^{-\alpha_{D}^{+}(p) E^{-d/2}}.
\end{equation}
Assume that $G$ has superpolynomial growth. Then
\begin{equation}\label{eq: main dirichlet super}
\lim_{E \searrow 0} \frac{\ln |\ln \, N^{\scriptscriptstyle D}(E)|}{|\ln E|}
= \lim_{E \searrow 0} \frac{\ln |\ln \, N^{\scriptscriptstyle A}(E)|}{|\ln E|} = \infty.
\end{equation}
\end{theorem}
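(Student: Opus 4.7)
Proof plan. The bulk of the work is the general upper bound \eqref{eq: main dirichlet arbitrary}; the upper halves of \eqref{eq: main dirichlet poly} and \eqref{eq: main dirichlet super} then fall out of it by inserting $V(n)\asymp n^d$, respectively by noting that on superpolynomial groups $V(cE^{-1/2})$ outgrows every power of $E^{-1}$. Since $N^{\scriptscriptstyle D} \le N^{\scriptscriptstyle A}$ by form monotonicity \eqref{e-quadratic-forms}, it suffices to bound $N^{\scriptscriptstyle A}$ from above; only the lower half of \eqref{eq: main dirichlet poly} needs a separate probabilistic construction.

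\emph{Upper bound.} The percolation Laplacian splits as $H^{\scriptscriptstyle A}_\omega = \bigoplus_C H^{\scriptscriptstyle A}(C)$ over clusters $C$ of $G_\omega$, so $\langle\delta_o,\chi_{[0,E]}(H^{\scriptscriptstyle A}_\omega)\delta_o\rangle$ vanishes unless $\lambda_1(H^{\scriptscriptstyle A}(C_o(\omega)))\le E$. Feeding this into \eqref{e-IDS} yields
\[
N^{\scriptscriptstyle A}(E)\;\le\;\frac{1}{|\cF|}\sum_{o\in\cF}\PP\bigl(\lambda_1(H^{\scriptscriptstyle A}(C_o))\le E\bigr).
\]
The quadratic forms \eqref{e-quadratic-forms} identify $\lambda_1(H^{\scriptscriptstyle A}(C))$ with the smallest eigenvalue of $H$ on $\ell^2(V)$ restricted, with Dirichlet boundary, to $\ell^2(C)$. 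I would then invoke from Section \ref{s-eigenvalue-bounds} a Faber-Krahn / Coulhon-Saloff-Coste bound of the form $\lambda_1^{\rm Dir}(C;H)\ge c/(V^{-1}(|C|))^2$ valid on any amenable Cayley graph, with an explicit constant tracking the degree $k$. Solving for $|C|$ converts the eigenvalue hypothesis into a volume lower bound: $\lambda_1(H^{\scriptscriptstyle A}(C))\le E$ forces $|C|\ge V(c E^{-1/2}-1)$, the explicit $c=1/(8\sqrt 2\,k)$ and the shift $-1$ being an artifact of the estimate. Combining with subcritical exponential decay $\PP(|C_o|\ge n)\le e^{-a_p n}$ (Aizenman-Newman / Menshikov) produces \eqref{eq: main dirichlet arbitrary}; the benign factor $1/2$ is absorbed either in the eigenvalue step or in the cluster-size tail.

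\emph{Lower bound in \eqref{eq: main dirichlet poly}.} Fix $R_E := \lceil c E^{-1/2}\rceil$ with $c=c(k,d)$ large, and for each $o\in V$ let $A_o$ be the event that every site (respectively every edge) of $B(o,R_E)$ is open; then $\PP(A_o)\ge p^{c_k V(R_E)}\ge \exp(-c' E^{-d/2})$. On $A_o$ the cluster of $o$ contains $B(o,R_E)$, so the Dirichlet ground state $\psi_o$ of $H$ on $B(o,R_E-1)$, extended by zero, is supported in the strict interior of $C_o(\omega)$. A direct computation with \eqref{e-quadratic-forms} shows that the boundary-potential contribution $\langle W^{b.c.}(C_o)\psi_o,\psi_o\rangle$ vanishes, so
\[
\lambda_1(H^{\scriptscriptstyle D}(C_o))\;\le\;\lambda_1^{\rm Dir}(B(o,R_E-1);H)\;\le\;C/R_E^2\;\le\;E
\]
for $c$ large (using the standard $O(1/R^2)$ bound on Dirichlet eigenvalues of balls in polynomial-growth groups). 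Collecting disjoint translates of $A_o$ inside a F\o lner sequence $\Lambda_n$ from Remark \ref{r-basicproperties} produces, by linear independence of the $\psi_o$, at least $\asymp |\Lambda_n|\,p^{V(R_E)}/V(R_E)$ eigenvalues of $H^{\scriptscriptstyle D,n}_\omega$ in $[0,E]$; dividing by $|\Lambda_n|$ and passing to the limit in \eqref{e-appoximationIDS} yields $N^{\scriptscriptstyle D}(E)\ge e^{-\alpha_{D}^{-}(p) E^{-d/2}}$.

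\emph{Main obstacle.} The crux is the Faber-Krahn inequality of the upper bound: only it upgrades a small-eigenvalue hypothesis into the volume estimate $|C|\ge V(cE^{-1/2})$; a naive argument based on diameter alone would give merely $|C|\ge cE^{-1/2}$ and thereby miss the van Hove exponent hidden inside $V$. A secondary, more bookkeeping-like point is verifying in the lower bound that enough of the constructed eigenfunctions are actually seen by the trace over $\cF$; the F\o lner approximation \eqref{e-appoximationIDS} together with the disjoint-support linear independence handles this cleanly.
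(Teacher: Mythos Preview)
Your proposal is correct and, for the upper bound, is exactly the paper's argument: Proposition~\ref{prop: faber-krahn} supplies the Coulhon--Saloff-Coste/Faber--Krahn inequality $\lambda^{\scriptscriptstyle A}(G')\ge (128k^2\phi(2|G'|)^2)^{-1}$, and this is fed into the abstract upper bound of Proposition~\ref{prp: abstract upper bounds for IDS} together with subcritical exponential cluster decay to obtain \eqref{eq: main dirichlet arbitrary}; the factor $1/2$ you noticed indeed comes from the ``$2$'' inside $\phi(2|G'|)$. The polynomial and superpolynomial upper statements follow from this general bound exactly as you describe.

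For the lower bound in \eqref{eq: main dirichlet poly} your route diverges slightly from the paper's. The paper instead applies the abstract lower bound of Proposition~\ref{prp: abstract lower bounds for IDS} directly through the trace formula \eqref{e-IDS}: it takes $G_n'=B(n)$, uses the deterministic estimate $\lambda^{\scriptscriptstyle D}(B(n))\le \beta_D^+ k/n^2$ of Proposition~\ref{prop: upper eigen bounds}, and bounds from below the probability that the cluster equals $B(n)$ \emph{exactly}. Your variant (require only that $B(o,R_E)$ be fully open, place an interior-supported test function so that the boundary potential vanishes, then count disjoint occurrences via the F\o lner approximation \eqref{e-appoximationIDS} and a law of large numbers) is also valid and arguably more flexible, but it trades the one-line trace-formula computation for the extra machinery of the finite-volume limit and an almost-sure counting argument. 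The paper's choice has the advantage, noted in the Remark after the theorem, that the lower bound is obtained without invoking \eqref{e-appoximationIDS} at all.
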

\begin{remark}
The inequality  $N^{\scriptscriptstyle D}(E) \leq N^{\scriptscriptstyle A}(E) $ in \eqref{eq: main dirichlet arbitrary} and \eqref{eq: main dirichlet poly}
is deduced from the convergence of the finite volume eigenvalue counting functions
to the IDS which is explained in Remark~\ref{r-basicproperties}.
This is slightly inconsistent with our approach that we want to deduce the asymptotic behaviour of the IDS
from the trace formula \eqref{e-IDS} alone. Note however that our proof of Theorem
\ref{thm: main Dirichlet} shows that even \emph{without the use of the finite volume approximation}
the estimate 
$N^{\scriptscriptstyle D}(E) \leq \exp\Big(-\frac{a_p}{2} V\big((8k\sqrt{2E})^{-1}  -1\big)\Big)$ holds. 
In the case of polynomial growth of order $d$ we have the two-sided bounds 
\begin{align*}
e^{-\alpha_{D}^{-}(p)E^{-d/2}} & \leq  N^{\scriptscriptstyle D}(E)  \leq e^{-\tilde\alpha_{D}^{+}(p) E^{-d/2}}
\text{ and } 
\\
e^{-\tilde\alpha_{D}^{-}(p)E^{-d/2}} &\leq N^{\scriptscriptstyle A}(E) \leq e^{-\alpha_{D}^{+}(p) E^{-d/2}} 
\end{align*}

with some  positive constants $\tilde\alpha_{D}^{+}, \tilde\alpha_{D}^{-}$.
Thus even without the knowledge that the IDS has finite volume approximations the
correct asymptotic behaviour of the IDS may be deduced. An analogous remark applies to equation
\eqref{eq: main dirichlet super}.
\end{remark}

\begin{remark}
Theorem~\ref{thm: main Dirichlet} is a generalisation 
of the results in \cite{KloppN-03} and \cite{KirschM-06} on subcritical bond percolation on the lattice.
Actually, \cite{KloppN-03} treats random hopping models, 
of which the edge percolation model is just a special case; moreover it covers supercritical bond percolation on the lattice
as well. However, \cite{KloppN-03} gives only the upper bound on the IDS (which, in Euclidean geometries, is considered the harder inequality), 
but does not supply a lower bound. 	In \cite{KirschM-06} upper and lower bounds are given using an independent proof.
Theorem~\ref{thm: main Dirichlet} is consistent with the Lifshitz asymptotics for various other types of random Schr\"odinger operators in Euclidean space,
cf.~e.g.~\cite{KirschM-07}. In particular, Lifshitz tails have been proven for the Anderson model,
i.e.~the discrete random Schr\"odinger operator on $\ell^2(\ZZ^d)$ with an i.i.d.~potential.
The first proofs of this result were given in \cite{Mezincescu-85,Simon-85b}.
Let us note that \cite{KloppN-03} shows that eigenvalue inequalities for
some models with off-diagonal disorder can be reduced to the analogous inequalities for the Anderson model.
Thus, Lifshitz type estimates for Anderson models imply those also for bond (and site) percolation models. 
This suggests that one should derive Lifshitz asymptotics for the Anderson model on general graphs, rather than study percolation models.
However, it is not clear whether the proof of \cite{Mezincescu-85,Simon-85b} can be adapted to  general amenable Cayley
graphs. One obstacle for this extension is the fact one has to bound the IDS in terms
of the eigenvalues of the random operator restricted to
a finite graph. In Euclidean space this can be
established using the fact that cubes are a very neat F\o lner sequence which are at the same time
fundamental domains of sublattices.
For general amenable groups such sequences do not exist necessarily.
The other reason is that the eigenvalue estimates for the Anderson models restricted on finite graphs
are established using the Temple inequality, which in turn to be applied efficiently needs lower bounds on the distance
between the two lowest eigenvalues. This lower bound on the spectral gap is immediate in the Euclidean case, while
for more general transitive graphs it may be inferred from a strengthened version of the
Cheeger inequality. Taking these considerations into account
one may hope that the proof of Lifshitz tails for the Anderson model on $\ZZ^d$ can be adapted
for Cayley graphs of polynomial growth. They are, apart from being amenable, residually finite and thus admit
an approximation by finite transitive graphs.
\end{remark}

\begin{remark} 
Actually the statements of the Theorems~\ref{thm: full graph} and~\ref{thm: main Dirichlet} hold 
not only for Laplacians but also for more general symmetric transition operators associated to 
Markov chains on the group $\Gamma$. A precise formulation of these results is presented in 
Section~\ref{s-related}.
\end{remark}

\begin{remark}[Test functions and lower bounds on the IDS]
Let us comment on the fact that in \eqref{eq: main dirichlet arbitrary}
a lower bound of the same type as the upper bound is missing. 
For random operators in Euclidean space the upper bound on the IDS is considered the 
non-trivial part of the Lifshitz asymptotics, while the lower bound can be obtained by a
natural choice of test functions for the Rayleigh quotient. For operators on Cayley graphs the situation is similar,
if we restrict ourselves to polynomial volume growth. Namely, in that case we can match the 
upper bound with a lower bound of the same type, by the use of an appropriate test function.
For groups of super-polynomial volume growth it is not clear whether this can be achieved in general.
In that situation the choice of a test function becomes intricate since 
the leading contribution to the Rayleigh quotient may come from the boundary.
It turns out that Lamplighter groups $\mathbb{Z}_{m} \wr \mathbb{Z}$ have certain special properties which 
enable one to find effective test functions and thus 
establish a proper lower bound on the IDS. These groups are amenable, but of exponential growth.
\end{remark}

Theorem~\ref{thm: main Dirichlet} implies in particular that the IDS is very sparse near the bottom of the spectrum
$E=0$ and consequently zero is a fluctuation boundary.
Relation \eqref{eq: main dirichlet poly} implies that in the case of polynomial growth the
Lifshitz exponent coincides with the van Hove exponent of the Laplacian on the full Cayley graph.
In particular, we have
\begin{equation*}
\lim_{E \searrow 0} \frac{\ln|\ln \, N^{\scriptscriptstyle D}(E)| }{|\ln N_\per(E)|} = \lim_{E \searrow 0} \frac{\ln|\ln \, N^{\scriptscriptstyle A}(E)| }{|\ln N_\per(E)|}\   =1
\end{equation*}

In the case of superpolynomial growth we have that both exponents are infinite.
One may ask whether the sequencs defining them  diverge at the same rate 
and whether the relation
\begin{equation*}
\lim_{E \searrow 0} \frac{\ln\ln|\ln \, N^{\scriptscriptstyle D}(E)| }{\ln|\ln N_\per(E)|} =
\lim_{E \searrow 0} \frac{\ln\ln|\ln \, N^{\scriptscriptstyle A}(E)| }{\ln|\ln N_\per(E)|}\   =1
\end{equation*}
holds under appropriate conditions, for instance, assuming exponential volume growth. 
We are not able prove this for arbitrary groups of exponential  growth, 
but at least for the case of the Lamplighter groups $\mathbb{Z}_{m} \wr \mathbb{Z}$.

\bigskip

\begin{theorem}\label{thm: lamplighter deterministic}
Let $G$ be a Cayley graph of the Lamplighter group $\mathbb{Z}_{m} \wr \mathbb{Z}$. There are
positive constants $a_{1}^{+}$ and $a_{2}^{+}$ such that
$$
N_\per(E) \leq a_{1}^{+}e^{-a_{2}^{+}E^{-1/2}}, \textrm{ for all }
E \textrm{ small enough.}
$$
Moreover for every $r>1/2$ there are positive constants
$a_{r,1}^{-}$ and $a_{r,2}^{-}$ such that
$$
N_\per(E) \geq a_{r,1}^{-}e^{-a_{r,2}^{-}E^{-r}}, \textrm{ for all
} E \textrm{ small enough.}
$$
\end{theorem}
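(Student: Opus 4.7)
The proof has two parts: the upper bound comes from inverting a heat-kernel estimate via a Tauberian argument, and the lower bound is produced by an explicit test function that exploits the fibration of the Cayley graph of $\mathbb{Z}_m\wr\mathbb{Z}$ over $\mathbb{Z}$.

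For the upper bound I would first invoke (or re-derive by isoperimetric / Nash-type bounds) the classical return-probability estimate $p_n(\iota,\iota)\leq C\exp(-c\,n^{1/3})$ for the simple random walk on $\mathbb{Z}_m\wr\mathbb{Z}$; it reflects the fact that a walker returning at time $n$ must revisit every lamp it has lit, which forces its $\mathbb{Z}$-projection to have range $O(n^{1/3})$. Writing $H=k(\Id-P)$ with $P=A/k$, the spectral theorem and the Taylor expansion of $e^{tkP}$ give
\[
\int_0^\infty e^{-tE}\,dN_\per(E) \;=\; \la\delta_\iota,e^{-tH}\delta_\iota\ra \;=\; e^{-tk}\sum_{n\geq 0}\frac{(tk)^n}{n!}\,p_n(\iota,\iota) \;\leq\; C'\exp(-c' t^{1/3})
\]
for $t$ large, since the Poisson-type sum concentrates near $n\sim tk$. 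The Chebyshev-type inequality $N_\per(E)\leq e^{tE}\int_0^\infty e^{-tE'}dN_\per(E')$, optimised at $t\sim E^{-3/2}$, then produces $N_\per(E)\leq a_1^{+}\exp(-a_2^{+}E^{-1/2})$.

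For the lower bound I would construct a test function tailored to the wreath-product structure. Fix a large $L$, let $g_L$ be the first Dirichlet eigenfunction of the one-dimensional discrete Laplacian on $\{0,\dots,L\}$ (with eigenvalue $\lambda_L\sim\pi^2/L^2$), and set
\[
\phi_L(f,n)\;:=\;g_L(n)\cdot\mathbf{1}\{\supp f\subseteq[0,L]\}.
\]
The crucial point is that every lamp-toggling edge contributes zero to the Dirichlet form: toggling $f(n)$ for $n\in[0,L]$ preserves the event $\{\supp f\subseteq[0,L]\}$, while for $n\notin[0,L]$ the factor $g_L(n)=0$ kills both endpoints. Hence only the step edges contribute, and the $m^{L+1}$ admissible lamp configurations factor out of both numerator and denominator of the Rayleigh quotient, leaving
\[
\frac{\la H\phi_L,\phi_L\ra}{\|\phi_L\|^2}\;=\;\frac{\sum_n|g_L(n)-g_L(n+1)|^2}{\|g_L\|_2^2}\;\sim\;\frac{\pi^2}{L^2},
\]
while $|\supp\phi_L|\sim L\,m^{L+1}$. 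Choosing $L\sim E^{-1/2}$ makes the Rayleigh quotient $\leq E$. By amenability, a Følner exhaustion $\Lambda_n$ admits $\gtrsim|\Lambda_n|/|\supp\phi_L\cdot(\supp\phi_L)^{-1}|$ disjoint left translates $\gamma\cdot\phi_L$; these form a pairwise orthogonal family with Rayleigh quotient $\leq E$. Dividing by $|\Lambda_n|$, letting $n\to\infty$, and using that $|\supp\phi_L\cdot(\supp\phi_L)^{-1}|$ grows at most like $m^{3L+1}$ up to polynomial factors, one gets $N_\per(E)\gtrsim\exp(-CE^{-1/2})$ (with polynomial corrections in $1/E$). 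Since for any fixed $r>1/2$ one has $CE^{-1/2}\leq a_{r,2}^{-}E^{-r}$ for all sufficiently small $E$, the bound $N_\per(E)\geq a_{r,1}^{-}\exp(-a_{r,2}^{-}E^{-r})$ follows.

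The main obstacle I expect is the sharp heat-kernel input $p_n(\iota,\iota)\leq Ce^{-cn^{1/3}}$: its proof on $\mathbb{Z}_m\wr\mathbb{Z}$ requires either an appeal to Pittet--Saloff-Coste-type analysis of wreath products or a direct range-of-walk argument, and tracking the constants through the Tauberian inversion so that they produce the correct $E^{-1/2}$ exponent is the genuinely analytic part of the argument. Once this is in place, the test-function packing for the lower bound is essentially a bookkeeping exercise of the type already appearing in Section~\ref{s-abstract-bounds}.
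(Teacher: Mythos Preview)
Your upper bound is essentially the paper's argument: both run a Tauberian inversion of the return-probability estimate $p_n(\iota,\iota)\preceq e^{-cn^{1/3}}$ for the lamplighter walk. The paper packages this as an abstract statement (Theorem~\ref{thm: oguni}, adapted from Oguni) that $p_{2n}\preceq e^{-(2n)^{b}}$ forces $N_\per(E)\preceq e^{-E^{-b/(1-b)}}$, and then plugs in $b=1/3$; your Poisson-concentration and Chebyshev optimisation at $t\sim E^{-3/2}$ is exactly what that abstract result unwinds to.

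Your lower bound, however, is genuinely different. The paper again goes through Theorem~\ref{thm: oguni}, this time inverting the \emph{lower} bound $p_{2n}\succeq e^{-(2n)^{1/3}}$; that Tauberian direction is lossy, which is precisely why the statement only claims $N_\per(E)\geq e^{-a_{r,2}^{-}E^{-r}}$ for each $r>1/2$ rather than the sharp $r=1/2$. Your test-function construction gives Rayleigh quotient $\sim L^{-2}$ on a support of size $\sim L\,m^{L}$, and the F\o lner packing then yields $N_\per(E)\gtrsim e^{-CE^{-1/2}}$ directly---strictly stronger than what the theorem asserts. What this buys the paper is uniformity in the generating set: the return-probability asymptotics is known for \emph{every} Cayley graph of $\mathbb{Z}_m\wr\mathbb{Z}$, so the Tauberian route needs no further work. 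Your construction, by contrast, is written for the step-plus-toggle generators, where the ``lamp-toggling edges contribute zero'' observation holds; for a general generating set the edges mix stepping and toggling and this shortcut is no longer literal. To cover arbitrary Cayley graphs you would need a Dirichlet-form comparison between generating sets (exactly the device the paper deploys, via Theorem~3.2 of \cite{Woess-00}, in the proof of Theorem~\ref{thm: lamplighter upper bounds}); this is routine but should be stated.
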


Thus we have an exponential behaviour of the IDS at the bottom of the spectrum,
in particular:
\begin{equation}
\label{eq: Lamplighter exponent}
\lim_{E\searrow 0} \frac{\ln |\ln N_\per(E)|}{|\ln E|} = \frac{1}{2}.
\end{equation}

Now we turn to random operators on Lamplighter groups.

\begin{theorem}\label{thm: lamplighter upper bounds}
Let $G$ be an arbitrary Cayley graph of the Lamplighter group
$\mathbb{Z}_{m} \wr \mathbb{Z}$. For every $p<p_{c}$ there are
positive constants $b_{1}, b_{2}, c_{1}, c_{2},$ such that the IDS of
the adjacency and Dirichlet (site or bond) percolation Laplacian satisfies the following inequality
\begin{equation}\label{eq: lamplighter bounds}
e^{-c_{1}e^{c_{2}E^{-1/2}}} \leq
N^{\scriptscriptstyle D}(E) \leq N^{\scriptscriptstyle A}(E) \leq e^{-b_{1}e^{b_{2}E^{-1/2}}}, \textrm{
for all } E>0 \textrm{ small enough.}
\end{equation}
\end{theorem}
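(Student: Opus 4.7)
\emph{Upper bound.} The graph $G$ has exponential volume growth, so there exist $\mu_1, \mu_2 > 0$ with $V(n) \geq \mu_1 e^{\mu_2 n}$ for all $n \in \NN$. Substituting this lower bound on $V$ into the universal estimate \eqref{eq: main dirichlet arbitrary} of Theorem~\ref{thm: main Dirichlet}, with argument $n = (8\sqrt 2\, k)^{-1} E^{-1/2} - 1$, yields
\[
N^{\scriptscriptstyle A}(E) \leq \exp\!\Bigl(-\tfrac{a_p}{2}\mu_1\, e^{\mu_2\bigl((8\sqrt 2\,k)^{-1}E^{-1/2}-1\bigr)}\Bigr) \leq e^{-b_1 e^{b_2 E^{-1/2}}}
\]
for suitable $b_1, b_2 > 0$ and $E$ small enough. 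The inequality $N^{\scriptscriptstyle D} \leq N^{\scriptscriptstyle A}$ has already been recorded in Remark~\ref{r-basicproperties}.

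\emph{Lower bound: test function.} Identify the vertices of $G$ with pairs $(\eta, k)$, where $\eta\colon \mathbb{Z} \to \mathbb{Z}_m$ has finite support and $k \in \mathbb{Z}$, and choose standard generators (walker step $\pm 1$, toggle of the lamp at the walker position). For $N \in \NN$ set
\[
F_N = \bigl\{(\eta, k) : \supp\eta \subset [0,N],\ k \in [1, N-1]\bigr\},\qquad \psi_N(\eta,k) = g(\eta)\, f(k),
\]
where $g \equiv m^{-(N+1)/2}$ on $\{\eta : \supp\eta \subset [0,N]\}$ (and zero elsewhere) and $f(k) = \sin(\pi k/N)$ on $[1, N-1]$ (zero elsewhere). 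Then $\|\psi_N\|_2 = 1$; the toggle generators preserve the constraint $\supp\eta \subset [0,N]$ because the walker sits in $[0, N]$, so they contribute nothing to the quadratic form $\la H\psi_N, \psi_N\ra$; only the walker steps contribute, producing precisely the one-dimensional Dirichlet Laplacian applied to $f$, whose smallest eigenvalue is of order $1/N^2$. Taking $N = \lceil C' E^{-1/2}\rceil$ gives simultaneously $\la H\psi_N, \psi_N\ra \leq E$ and $|F_N \cup \partial F_N| = O(m^{N+1}) = O(e^{c_2 E^{-1/2}})$.

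\emph{Lower bound: conclusion.} Let $\mathcal E_N$ denote the event that every vertex of $F_N \cup \partial F_N$ is open (or, for bond percolation, that every edge incident to $F_N$ is open). On $\mathcal E_N$ every vertex of $F_N$ has full degree $k$ in $G_\omega$, so the boundary potential $W^{b.c.}_\omega$ vanishes on $F_N$ and $\la H^{\scriptscriptstyle D}_\omega \psi_N, \psi_N\ra = \la H\psi_N, \psi_N\ra \leq E$. By independence of the Bernoulli variables,
\[
\PP(\mathcal E_N) \geq p^{|F_N \cup \partial F_N|} \geq e^{-c_1 e^{c_2 E^{-1/2}}}.
\]
The abstract lower bound of Section~\ref{s-abstract-bounds}, applied to $\psi_N$ through the trace formula \eqref{e-IDS} and translation invariance of the percolation law, then gives $N^{\scriptscriptstyle D}(E) \geq c\,\PP(\mathcal E_N)/|F_N \cup \partial F_N|$, and the polynomial prefactor is absorbed into the double exponential.

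\emph{Main obstacle.} The substantive step is the vanishing of the toggle-generator contribution to $\la H\psi_N, \psi_N\ra$: this relies on the semidirect-product structure of $\mathbb{Z}_m \wr \mathbb{Z}$ and on choosing $\psi_N$ as the tensor product of a \emph{uniform} distribution over admissible lamp configurations with a Dirichlet eigenfunction in the walker coordinate. Precisely this device sharpens the deterministic exponent $r>1/2$ of Theorem~\ref{thm: lamplighter deterministic} to the random exponent $1/2$: the support of $\psi_N$ now has size $e^{O(N)}$ instead of the larger support one encounters in the deterministic construction, which in turn controls the probability $\PP(\mathcal E_N)$ at the desired sharp rate. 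A secondary issue is that the theorem is formulated for an \emph{arbitrary} Cayley graph, but since word-metrics on $\Gamma$ are bi-Lipschitz equivalent and the exponential-growth rate is a quasi-isometry invariant, only the constants $b_i, c_i$ in \eqref{eq: lamplighter bounds} may shift.
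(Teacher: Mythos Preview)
Your upper bound is identical to the paper's: exponential volume growth fed into \eqref{eq: main dirichlet arbitrary}.

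For the lower bound you take a genuinely different route. The paper works on the Cayley graph with generators $S_0=\{(l\delta_1,1),(l\delta_0,-1)\}$, identifies it with the horocyclic product of two $(m{+}1)$-regular trees, and then quotes the explicit spectral computation of Bartholdi--Woess on the ``tetrahedra'' $K_n$: these subgraphs have $|K_n|=(n{+}1)m^n$ and carry a Dirichlet eigenvalue $2m(1-\cos(\pi/n))$ with eigenfunction vanishing on the inner boundary. Proposition~\ref{prp: abstract lower bounds for IDS} is then applied directly with $G_n'=K_n$. Your construction is more elementary and self-contained: an explicit tensor-product test function $\psi_N=g\otimes f$ on the ``move/toggle'' Cayley graph, exploiting that the uniform lamp-distribution $g$ kills the toggle contribution so that only the one-dimensional path Laplacian survives. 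Both produce finite connected subgraphs of size $\sim N\,m^N$ with lowest Dirichlet eigenvalue $\sim N^{-2}$; the paper's version imports a nontrivial external computation, yours is a direct Rayleigh quotient. (Minor slip: $\|\psi_N\|_2\neq 1$ since $\|f\|_2^2\sim N/2$, but this is irrelevant for the quotient.)

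Two points deserve tightening. First, Proposition~\ref{prp: abstract lower bounds for IDS} is formulated for the event ``$G_n'$ is \emph{exactly} a cluster'', not for your event $\mathcal E_N=\{F_N\cup\partial F_N\text{ all open}\}$. You can either (a) set $G_N'=F_N\cup\partial F_N$, observe that your test function gives $\lambda^{\scriptscriptstyle D}(G_N')\le \pi^2/N^2$ since $\psi_N$ vanishes on $\partial F_N$, and then invoke Proposition~\ref{prp: abstract lower bounds for IDS} verbatim; or (b) run the trace-formula argument directly: on $\mathcal E_N$ one has $\Tr(\chi_{F_N}\chi_{[0,2E]}(H^{\scriptscriptstyle D}_\omega))\ge \langle\psi_N,\chi_{[0,2E]}(H^{\scriptscriptstyle D}_\omega)\psi_N\rangle/\|\psi_N\|^2\ge 1/2$ by Chebyshev, whence $|F_N|\,N^{\scriptscriptstyle D}(2E)\ge\frac12\PP(\mathcal E_N)$. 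Either way the stated inequality follows.

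Second, the passage to an arbitrary Cayley graph is not merely a constant shift. Bi-Lipschitz equivalence of word metrics gives comparability of the Dirichlet forms (this is the content of Theorem~3.2 in \cite{Woess-00}, which the paper invokes), but you also need the $S$-neighbourhood of $F_N$ to stay of size $O(N m^N)$ and the boundary potential to vanish on $\supp\psi_N$. The paper handles this by replacing $F_N$ (their $V_n$) with its $R$-thickening $V_{n,R}$ in the new metric, where $R$ bounds the $S$-distance between $S_0$-neighbours; you should indicate the analogous step rather than leave it at the quasi-isometry remark.
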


\begin{remark}
 \label{r-lowerbounds}
Our proofs show that the lower bounds
$e^{-\alpha_{D}^{-}(p)E^{-d/2}} \leq N^{\scriptscriptstyle D}(E) \leq N^{\scriptscriptstyle A}(E) $
in Theorem~\ref{thm: main Dirichlet}
and
$e^{-c_{1}e^{c_{2}E^{-1/2}}} \leq  N^{\scriptscriptstyle D}(E) \leq N^{\scriptscriptstyle A}(E)$
in Theorem~\ref{thm: lamplighter upper bounds}
are valid for all values of the percolation parameter $p\in ]0,1]$
\end{remark}

Let us now turn to combinatorial Laplacians $(H^{\scriptscriptstyle N}_\omega)_\omega$, i.e.~Laplacians with the third type of boundary term which we did not discuss yet.
In the case of Neumann boundary conditions the energy zero is not a fluctuation boundary.
The IDS  has a discontinuity at zero, thus one may say that the zeroth $L^2$-Betti number of the random operator $(H^{\scriptscriptstyle N}_\omega)_\omega$
does not vanish. For the combinatorial Laplacian we are able to treat general quasi-transitive graphs.
In particular, $G$
does not need to be neither amenable nor a Cayley graph.
The following generalises a result of \cite{KirschM-06} on $\ZZ^d$-bond percolation.

\begin{theorem}\label{thm: main Neumann}
Let $G$ be a infinite graph with bounded vertex degree and $\Gamma$ a group of automorphisms acting freely and 
cofinitely on $G$. Consider the IDS of the
Neumann percolation Hamiltonian $(H^{\scriptscriptstyle N}_\omega)_\omega$ of a subcritical site or bond percolation process.
There exist positive constants $\alpha_{N}^{+}(p)$ and $\alpha_{N}^{-}(p)$ such that for all positive $E$
small enough
\begin{equation}\label{eq: main neumann}
e^{-\alpha_{N}^{-}(p)E^{-1/2}} \leq N^{\scriptscriptstyle N}(E)-N^{\scriptscriptstyle N}(0) \leq e^{-\alpha_{N}^{+}(p) E^{-1/2}}.
\end{equation}
\end{theorem}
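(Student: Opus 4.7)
Proof plan. The key structural observation is that the combinatorial Laplacian $H^{\scriptscriptstyle N}_\omega$, in contrast to $H^{\scriptscriptstyle A}_\omega$ and $H^{\scriptscriptstyle D}_\omega$, decomposes as a direct orthogonal sum $H^{\scriptscriptstyle N}_\omega = \bigoplus_C H^{\scriptscriptstyle N}(C)$ over the connected components $C$ of the percolation subgraph $G_\omega$. In the subcritical regime every such $C$ is almost surely finite, and $\ker H^{\scriptscriptstyle N}(C)$ is spanned by the constant function on $C$; these kernels account entirely for the atom $N^{\scriptscriptstyle N}(0)$, so that $N^{\scriptscriptstyle N}(E) - N^{\scriptscriptstyle N}(0)$ counts, via the trace formula \eqref{e-IDS}, only strictly positive eigenvalues of cluster Laplacians not exceeding $E$. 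Throughout we use the fact that the subcritical cluster size has an exponential tail $\PP[|C_o| \geq n] \leq C e^{-a_p n}$ on any bounded-degree quasi-transitive graph.

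For the upper bound in \eqref{eq: main neumann}, combine the trivial estimate $\la\delta_x,\chi_{(0,E]}(H^{\scriptscriptstyle N}_\omega)\delta_x\ra \leq 1$ (which vanishes whenever $\lambda_2(H^{\scriptscriptstyle N}(C_x)) > E$) with a Cheeger--Mohar type spectral-gap inequality (cf.~Section~\ref{s-eigenvalue-bounds}) yielding an absolute constant $c_G > 0$ with $\lambda_2(H^{\scriptscriptstyle N}(C)) \geq c_G |C|^{-2}$ for every finite connected subgraph $C$ of $G$. Plugging into the trace formula and averaging over $\cF$ gives
\[
N^{\scriptscriptstyle N}(E) - N^{\scriptscriptstyle N}(0) \leq \max_{x \in \cF} \PP\bigl[\, |C_x| \geq c_G^{1/2} E^{-1/2}\, \bigr] \leq C e^{-\alpha_N^+(p) E^{-1/2}}.
\]

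For the lower bound, fix $o \in \cF$ and set $n := \lceil \pi E^{-1/2} \rceil$. Since $G$ is infinite and connected, there exists a geodesic $o = v_0, v_1, \dots, v_n$ with $d_G(o, v_j) = j$; the triangle inequality then forces every edge of the induced subgraph $G[\Lambda]$ on $\Lambda := \{v_0, \dots, v_n\}$ to join two vertices whose indices differ by exactly one, so $G[\Lambda]$ coincides with the path graph on $n+1$ vertices. The Neumann spectrum of this path is explicit, and its smallest positive eigenvalue $2(1 - \cos(\pi/(n+1)))$ is at most $E$ by the choice of $n$, while the corresponding normalised eigenfunction $\psi_2$ satisfies $|\psi_2(o)|^2 \geq c_0/n$ at the endpoint $o = v_0$. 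The event $F_\Lambda$ that $\Lambda$ constitutes an entire cluster of the percolation (all vertices of $\Lambda$ open and all boundary vertices closed, analogously for bond percolation) has probability at least $e^{-\beta n}$, because of the bounded vertex degree and the Bernoulli product structure. Conditional on $F_\Lambda$ we have $\la\delta_o, \chi_{(0,E]}(H^{\scriptscriptstyle N}_\omega) \delta_o\ra \geq |\psi_2(o)|^2$, and hence
\[
N^{\scriptscriptstyle N}(E) - N^{\scriptscriptstyle N}(0) \geq \frac{1}{|\cF|} \cdot \frac{c_0}{n} \cdot e^{-\beta n} \geq e^{-\alpha_N^-(p) E^{-1/2}}
\]
for all sufficiently small $E$.

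The main obstacle lies in the lower bound: one must exhibit, intrinsically to $G$, a connected subgraph $\Lambda \ni o$ that simultaneously has (i) an eigenvalue of $H^{\scriptscriptstyle N}(G[\Lambda])$ below $E$, (ii) a volume small enough so that $\PP[F_\Lambda] \geq e^{-O(E^{-1/2})}$, and (iii) an eigenfunction with non-negligible mass at $o$. Geodesic paths resolve all three requirements at once because they reduce $G[\Lambda]$ to an ordinary combinatorial path with completely explicit spectrum, which is why no amenability or growth assumption is needed on $G$. A secondary technical point is the uniform exponential tail of the subcritical cluster size on general quasi-transitive graphs, which one takes from the standard Menshikov--Aizenman--Barsky theorem as recorded in \cite{AntunovicV-b}.
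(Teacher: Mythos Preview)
Your argument is correct and follows the paper's strategy: the upper bound is exactly the paper's combination of Proposition~\ref{prp: abstract upper bounds for IDS} with the Cheeger-type estimate of Proposition~\ref{prop: cheeger}, and for the lower bound you use the same linear (geodesic) subgraphs $L_n$ with the same eigenvalue bound and the same cluster-probability estimate.

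The one genuine difference is in how you extract the lower bound on the trace. The paper, via Proposition~\ref{prp: abstract lower bounds for IDS}, sums over all $\Gamma$-translates of $L_n$ that meet $\cF$; after a change of variables this produces the full trace $\sum_{y\in L_n}\langle\delta_y,\chi_{]0,\lambda^{\scriptscriptstyle N}(L_n)]}(H^{\scriptscriptstyle N}(L_n))\delta_y\rangle\geq 1$, so no information about the eigenfunction is needed. You instead anchor the path at a single vertex $o\in\cF$ and invoke the explicit value $|\psi_2(o)|^2\geq c_0/n$ of the cosine eigenfunction at an endpoint. Your route is more elementary in that it does not use the group action at all (indeed it is closer in spirit to the abstract argument the paper gives later in \S\ref{ss-abstract-result}), at the price of a harmless polynomial factor $1/n$ that is absorbed into the exponential. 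The paper's translation trick buys a cleaner constant and, more importantly, works verbatim for \emph{any} sequence of subgraphs $G_n'$ without needing to know where the eigenfunction lives --- which is why the same Proposition~\ref{prp: abstract lower bounds for IDS} also drives the Dirichlet/adjacency lower bounds and the Lamplighter case.
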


The value $N^{\scriptscriptstyle N}(0)$ coincides with the average number of clusters per vertex in the random graph $G_\omega$.
After subtracting this value we can speak of \eqref{eq: main neumann} as a kind of `renormalised'
Lifshitz asymptotics with exponent $1/2$.

\begin{remark} 
Again, Theorem~\ref{thm: main Neumann}  can be extended to more general models.
More precisely, one can replace the Laplacian $H^{\scriptscriptstyle N}_\omega$ by a regularised Markov transition operator 
in which case the estimates \eqref{eq: main neumann} still hold. Furthermore, 
it is possible to establish the same result for random combinatorial Laplacians generated 
by a long range bond percolation process on a quasi-transitive graph $G$.
Both generalisations are presented in Section~\ref{s-related}.
\end{remark}

On an abstract level Theorem~\ref{thm: main Neumann} and its proof show that the low energy
asymptotics of the combinatorial Laplacian does not depend on geometric properties of $G$,
but only on the rate at which the linear clusters are produced by the percolation process, 
see  \S\ref{ss-abstract-result} for a discussion of this phenomenon.
In this context let us note that M\"uller and Richard \cite{MR} have obtained
results on the low energy asymptotics of combinatorial percolation Laplacians on 
certain Delone sets in $\RR^d$.

\section{Abstract upper and lower bounds on the IDS}
\label{s-abstract-bounds}

To obtain upper bounds for the integrated density of states near
the lower spectral edge, we have to prove that the spectrum is
relatively scarce in this area. In the subcritical phase the
spectrum is only pure point and consists of the eigenvalues of the
operators $H^{\scriptscriptstyle \#}(G')$, where $G'$ goes over the set of all finite
subgraphs. So what one really needs are certain lower bounds for
the eigenvalues of the operators $H^{\scriptscriptstyle \#}(G')$, $\# \in
\left\{N,A,D\right\}$. Vice versa for lower bounds for the IDS we
shall need upper bounds for these eigenvalues in some neighbourhood
of the lower spectral edge. In this spirit we present Propositions
\ref{prp: abstract upper bounds for IDS} and~\ref{prp: abstract
lower bounds for IDS}, which are  generalisations of
Lemmata 2.7 and 2.9 in \cite{KirschM-06}.
Denote with
$\lambda^{\scriptscriptstyle \#}(G')$ the lowest nonzero eigenvalue of the operator
$H^{\scriptscriptstyle \#}(G')$, $\# \in \left\{N,A,D\right\}$.

\begin{proposition}\label{prp: abstract upper bounds for IDS}
Let $G$ be a quasi-transitive graph and ${\scriptstyle\#}\in
\left\{A,D,N\right\}$. Assume that there is a continuous strictly
decreasing function $f \colon \left[1,\infty\right[ \to
\mathbb{R}^{+}$ such that $\lim_{s \to\infty}f(s)=0$ and
$\lambda^{\scriptscriptstyle \#}(G') \geq f(|G'|)$ for any finite subgraph $G'$. Then, for
every $0<p<p_{c}$ there is a positive constant $a_{p}$ such that
\begin{equation} \label{eq: upper bounds for IDS 1}
N^{\scriptscriptstyle \#}(E)-N^{\scriptscriptstyle \#}(0) \leq e^{-a_{p}f^{-1}(E)},
\end{equation}
for all $E$ from the interval $]0,f(1)[$ on which the inverse
function $f^{-1}$ is well defined. 
\end{proposition}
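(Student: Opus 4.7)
The plan is to exploit the block-diagonal structure of $H^{\scriptscriptstyle \#}_\omega$ over the cluster decomposition of the percolation graph, use the hypothesis $\lambda^{\scriptscriptstyle \#}(G')\geq f(|G'|)$ to rule out contributions from clusters of volume smaller than $f^{-1}(E)$, and then invoke the exponential decay of the cluster volume distribution in the subcritical phase to conclude.

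First I would expand the trace formula \eqref{e-IDS} as
\begin{equation*}
N^{\scriptscriptstyle \#}(E)-N^{\scriptscriptstyle \#}(0)=\frac{1}{|\cF|}\sum_{x\in\cF}\EE\bigl\{\la\delta_x,\chi_{]0,E]}(H^{\scriptscriptstyle \#}_\omega)\delta_x\ra\bigr\},
\end{equation*}
and observe that since every cluster $C$ of $G_\omega$ is $H^{\scriptscriptstyle \#}_\omega$-invariant, the operator decomposes as a direct sum $\bigoplus_C H^{\scriptscriptstyle \#}(C)$, so only the block corresponding to the cluster $C_x(\omega)$ of the vertex $x$ contributes to the diagonal matrix element (for $x\notin V(\omega)$ the contribution vanishes outright). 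Since $f$ is strictly decreasing, $|C_x(\omega)|<f^{-1}(E)$ entails $\lambda^{\scriptscriptstyle \#}(C_x(\omega))\geq f(|C_x(\omega)|)>E$, whence $\chi_{]0,E]}(H^{\scriptscriptstyle \#}(C_x(\omega)))=0$. Bounding the matrix element trivially by $1$ in the complementary case, one gets
\begin{equation*}
\la\delta_x,\chi_{]0,E]}(H^{\scriptscriptstyle \#}_\omega)\delta_x\ra\leq\mathbf{1}\{|C_x(\omega)|\geq f^{-1}(E)\}.
\end{equation*}

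Next I would invoke the exponential decay of the cluster volume distribution in subcritical percolation on quasi-transitive graphs: for every $p<p_c$ there exists $a_p>0$, uniform in the vertex (thanks to quasi-transitivity), such that $\PP(|C_x(\omega)|\geq n)\leq e^{-a_p n}$ for all $n\in\NN$. Combining this with the preceding pointwise estimate and using $\lceil f^{-1}(E)\rceil\geq f^{-1}(E)$ yields
\begin{equation*}
N^{\scriptscriptstyle \#}(E)-N^{\scriptscriptstyle \#}(0)\leq\frac{1}{|\cF|}\sum_{x\in\cF}\PP\bigl(|C_x(\omega)|\geq f^{-1}(E)\bigr)\leq e^{-a_p f^{-1}(E)},
\end{equation*}
which is the desired bound \eqref{eq: upper bounds for IDS 1}.

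The main obstacle is not the short spectral calculus manipulation but the import of the exponential cluster volume decay: while classical on $\ZZ^d$ (Aizenman--Barsky, Menshikov), for general quasi-transitive graphs and for both site and bond percolation one must rely on the corresponding extensions in the percolation literature, or on a companion paper of the authors. Once this probabilistic ingredient is at hand, the argument reduces entirely to the translation of a spectral question at energy $E$ into a geometric event involving the cluster size $f^{-1}(E)$, averaged over the fundamental domain.
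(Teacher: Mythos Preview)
Your proof is correct and follows essentially the same approach as the paper's own proof: expand the trace formula over the fundamental domain, use the block-diagonal structure over clusters to reduce to $H^{\scriptscriptstyle \#}(C_x(\omega))$, invoke the hypothesis on $\lambda^{\scriptscriptstyle \#}$ to kill contributions from small clusters, bound the remaining matrix elements by $1$, and conclude via the exponential decay of subcritical cluster volumes on quasi-transitive graphs (for which the paper indeed cites its companion paper \cite{AntunovicV-a}, building on \cite{AizenmanN-84,AizenmanB-87}). Your identification of the exponential cluster-size decay as the key imported ingredient matches the paper's discussion exactly.
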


\begin{proof}
Fix $\# \in \left\{N,A,D\right\}$ and $0<E<f(1)$. Since the
subspace $\ell^{2}(C_{x}(\omega))$ is invariant for the operator
$H^{\scriptscriptstyle \#}_{\omega}$ and the restriction on this subspace is exactly
$H^{\scriptscriptstyle \#}(C_{x}(\omega))$ we can write
\begin{equation} \label{eq: formula for IDS}
N^{\scriptscriptstyle \#}(E) - N^{\scriptscriptstyle \#}(0) = \frac{1}{|\mathcal{F}|} \sum_{x \in
\mathcal{F}} \mathbb{E}\Big(\left<\delta_{x},
\chi_{\left]0,E\right]}(H^{\scriptscriptstyle \#}(C_{x}(\omega)))\delta_{x}\right>\Big).
\end{equation}
Now $\chi_{\left]0,E\right]}(H^{\scriptscriptstyle \#}(C_{x}(\omega)))$ is the zero
operator if $E < \lambda^{\scriptscriptstyle \#}(C_{x}(\omega))$, in particular in the case
$|C_{x}(\omega)| < f^{-1}(E)$. Since $\left<\delta_{x},
\chi_{\left]0,E\right]}(H^{\scriptscriptstyle \#}(C_{x}(\omega)))\delta_{x}\right>
\leq 1$ for any $E$ and $\omega$ we can write
\begin{multline*}
N^{\scriptscriptstyle \#}(E)-N^{\scriptscriptstyle \#}(0) 
\\
 = \frac{1}{|\mathcal{F}|} \sum_{x \in
\mathcal{F}} \mathbb{E}\Big( \left<\delta_{x},
\chi_{\left]0,E\right]}(H^{\scriptscriptstyle \#}(C_{x}(\omega)))\delta_{x}\right>
\chi_{\left\{|C_{x}(\omega)| \geq f^{-1}(E)\right\}}(\omega)\Big)
\\  
\leq \frac{1}{|\mathcal{F}|} \sum_{x \in \mathcal{F}}
\mathbb{P}(|C_{x}(\omega)| \geq f^{-1}(E)).
\end{multline*}
Now the result follows from the fact that the probabilities of
large subcritical clusters in quasi-transitive graphs decay
 exponentially, i.e.~$\mathbb{P}(|C_{x}(\omega)| \geq n)
\leq e^{-a_{p}n}$ for all positive integers $n$, all vertices $x$
and all $p<p_{c}$, where $a_{p}$ is a positive constant depending
only on the value of the parameter $p$. This fact is established
for general quasi-transitive graphs in \cite{AntunovicV-a} using the
methods developed in \cite{AizenmanN-84,AizenmanB-87}.
\end{proof}

\begin{proposition}\label{prp: abstract lower bounds for IDS}
Let $G$ be a graph with bounded vertex degree and $\Gamma$ a group of automorphisms
acting cofinitely on $G$ and ${\scriptstyle\#}\in
\left\{A,D,N\right\}$. Suppose that there is a sequence of
connected subgraphs $(G'_{n})_{n}$ and a sequence $(c_{n})_{n}$ in
$\mathbb{R}^{+}$ such that
\begin{enumerate}[\rm(i)]
    \item $\displaystyle \lim_{n \to \infty} |G'_{n}| = \infty$,
    \item $\displaystyle \lim_{n \to \infty}c_{n} = 0$,
    \item $\lambda^{\scriptscriptstyle \#}(G'_{n}) \leq c_{n}$.
\end{enumerate}
For every $E>0$ small enough define $n(E):=\min\left\{n; c_{n}
\leq E \right\}$. Then for every $0<p<1$ there is a positive
constant $b_{p}$ such that the following inequality holds for all
$E>0$ small enough
\begin{equation}\label{eq: lower bounds for IDS 1}
N^{\scriptscriptstyle \#}(E)-N^{\scriptscriptstyle \#}(0) \geq  \frac{1}{|\mathcal{F}|}
\mathbb{P}(G_{n(E)}' \textrm{ is a cluster in } G_\omega) \geq
e^{-b_{p}|G'_{n(E)}|}.
\end{equation}
\end{proposition}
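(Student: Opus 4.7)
The plan is to exploit the block-diagonal decomposition of $H^{\scriptscriptstyle \#}_\omega$ along percolation clusters: on the event that a suitable translate of $G_{n(E)}'$ appears as an isolated cluster of $G_\omega$, hypothesis (iii) produces a nonzero eigenvalue at most $c_{n(E)}\le E$ which contributes to the spectral projection at energy $E$. Starting from the trace formula~\eqref{e-IDS},
\[
N^{\scriptscriptstyle \#}(E)-N^{\scriptscriptstyle \#}(0) \;=\; \frac{1}{|\mathcal{F}|}\sum_{x\in\mathcal{F}}\mathbb{E}\big[\langle\delta_x,\chi_{]0,E]}(H^{\scriptscriptstyle \#}_\omega)\delta_x\rangle\big],
\]
I would therefore bound each summand below by the contribution of this favourable configuration.

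For the right-hand inequality in~\eqref{eq: lower bounds for IDS 1}, let $\mathcal{E}$ denote the event that $G_{n(E)}'$ is itself a cluster of $G_\omega$. For site percolation this is the intersection ``every vertex of $V(G_{n(E)}')$ is open and every vertex of the external vertex boundary $\partial_V G_{n(E)}'$ is closed''. Since vertex degrees in $G$ are bounded by some $k$, we have $|\partial_V G_{n(E)}'|\le k\,|G_{n(E)}'|$, and independence of the $\omega_y$ yields
\[
q \;:=\; \mathbb{P}(\mathcal{E}) \;\ge\; p^{|G_{n(E)}'|}(1-p)^{k\,|G_{n(E)}'|} \;\ge\; e^{-b_p\,|G_{n(E)}'|}, \qquad b_p := -\ln\!\bigl(p(1-p)^k\bigr) > 0,
\]
for $p\in(0,1)$. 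The bond percolation case is analogous, with the vertex boundary replaced by the set of edges having exactly one endpoint in $V(G_{n(E)}')$.

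For the left-hand inequality in~\eqref{eq: lower bounds for IDS 1}, let $\psi$ be a normalised eigenfunction of $H^{\scriptscriptstyle \#}(G_{n(E)}')$ corresponding to the eigenvalue $\lambda^{\scriptscriptstyle \#}(G_{n(E)}')\le c_{n(E)}$. For each $\gamma\in\Gamma$ the translated event $\mathcal{E}_\gamma$, that $\gamma V(G_{n(E)}')$ is a cluster of $G_\omega$, has the same probability $q$ by $\Gamma$-invariance of $\mathbb{P}$, and on $\mathcal{E}_\gamma$ the operator inequality $\chi_{]0,E]}(H^{\scriptscriptstyle \#}_\omega)\ge |\psi(\gamma^{-1}\cdot)\rangle\langle\psi(\gamma^{-1}\cdot)|$ holds on $\ell^2(\gamma V(G_{n(E)}'))$. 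Since the $\mathcal{E}_\gamma$ for distinct translates containing a given vertex $x$ specify different clusters at $x$ and are therefore pairwise disjoint, I obtain
\[
\mathbb{E}\big[\langle\delta_x,\chi_{]0,E]}(H^{\scriptscriptstyle \#}_\omega)\delta_x\rangle\big] \;\ge\; q\!\!\sum_{\substack{\gamma\in\Gamma\\\gamma^{-1}x\,\in\, V(G_{n(E)}')}}\!\!|\psi(\gamma^{-1}x)|^2.
\]
Summing over $x\in\mathcal{F}$ and using that $\Gamma$ acts freely with fundamental domain $\mathcal{F}$, the map $(x,\gamma)\mapsto\gamma^{-1}x$ is a bijection $\mathcal{F}\times\Gamma\to V(G)$, so the constraint $\gamma^{-1}x\in V(G_{n(E)}')$ picks out each vertex $v$ of $V(G_{n(E)}')$ exactly once. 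The double sum thus collapses, by normalisation of $\psi$, to $q\sum_{v\in V(G_{n(E)}')}|\psi(v)|^2 = q$; dividing by $|\mathcal{F}|$ gives the first inequality.

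The main obstacle is the reassembly step. A direct attempt to localise the bound to a single matrix element $\langle\delta_{x_0},\chi_{]0,E]}(H^{\scriptscriptstyle \#}_\omega)\delta_{x_0}\rangle$ at a fixed $x_0\in\mathcal{F}\cap V(G_{n(E)}')$ would only yield $q\,|\psi(x_0)|^2$, which cannot be controlled from below since the spatial profile of $\psi$ is not known. Summing over all translates and absorbing the unknown profile via $\|\psi\|_{\ell^2}^2=1$ is the essential idea, and the freeness of the $\Gamma$-action on vertices is precisely what makes the vertex-to-translate bookkeeping bijective; a potentially nontrivial setwise stabiliser of $V(G_{n(E)}')$ in $\Gamma$ would introduce only a polynomial-in-$|G_{n(E)}'|$ multiplicative factor which can be absorbed into the constant $b_p$.
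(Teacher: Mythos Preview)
Your argument is correct and follows essentially the same route as the paper: restrict to configurations in which a $\Gamma$-translate of $G'_{n(E)}$ occurs as an isolated cluster, invoke hypothesis~(iii) to place a nonzero eigenvalue in $]0,E]$, and reassemble the contributions over translates using the $\Gamma$-action and the invariance of $\mathbb{P}$. The only cosmetic difference is bookkeeping: the paper sums over \emph{representatives} $\tau\in\mathcal{T}_x(E)$ of distinct translates and bounds below by the trace of the spectral projection $\chi_{]0,\lambda^{\#}]}\bigl(H^{\#}(G'_{n(E)})\bigr)\ge 1$, whereas you sum over \emph{all} $\gamma\in\Gamma$ and use the $\ell^2$-normalisation of a single eigenfunction together with the bijection $\mathcal{F}\times\Gamma\to V(G)$ coming from freeness; your remark on the setwise stabiliser is at the same level of care as the paper's own treatment of that point.
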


\begin{proof}
Fix $\# \in \left\{N,A,D\right\}$ and $E>0$ small enough so that
$n(E)$ is well defined. Define $\mathcal{S}_{x}(E):=\left\{\tau
\in \Gamma; x \in \tau G_{n(E)}'\right\}$ where $\tau G_{n(E)}'$
is the translation of the subgraph $G_{n(E)}'$ obtained by mapping
each vertex of the subgraph $G_{n(E)}'$ by the automorphism $\tau$. On
the set $\mathcal{S}_{x}(E)$ define the equivalence relation
$\simeq$ in the following way $\tau_{1} \simeq \tau_{2}
:\Leftrightarrow \tau_{1} G_{n(E)}' = \tau_{2} G_{n(E)}'$. Now
take $\mathcal{T}_{x}(E)$, a subset of $\mathcal{S}_{x}(E)$, which
contains exactly one element from each equivalence class. 
Formula \eqref{eq: formula for IDS} implies immediately
\begin{multline} 
|\mathcal{F}|\big( N^{\scriptscriptstyle \#}(E) - N^{\scriptscriptstyle \#}(0)\big) 
\\
\geq \sum_{x \in
\mathcal{F}} \mathbb{E}\Big(\left<\delta_{x},
\chi_{]0,E]}(H^{\scriptscriptstyle \#}(C_{x}(\omega)))\delta_{x}\right>
\chi_{\left\{\exists \, \tau \in \mathcal{T}_{x}(E)\colon C_{x}(\omega)=\tau
G_{n(E)}' \right\}}\Big)
\\
\label{eq:lb}
\geq \sum_{x \in
\mathcal{F}} \mathbb{E}\Big(\left<\delta_{x},
\chi_{]0,\lambda^{\scriptscriptstyle \#}(G'_{n(E)})]}(H^{\scriptscriptstyle \#}(C_{x}(\omega)))\delta_{x}\right>
\chi_{\left\{\exists \, \tau \in \mathcal{T}_{x}(E)\colon C_{x}(\omega)=\tau
G_{n(E)}' \right\}}\Big)	
\end{multline}
since  $ \lambda^{\scriptscriptstyle \#}(G'_{n(E)}) \leq c_{n(E)} \leq E$.	
By definition of $\mathcal{T}_{x}(E)$, the last expression in \eqref{eq:lb} equals
\begin{align*}	
&\sum_{x \in \mathcal{F}} \sum_{\tau
\in \mathcal{T}_{x}(E)} \mathbb{E}\Big(\left<\delta_{x},
\chi_{]0,\lambda^{\scriptscriptstyle \#}(G'_{n(E)})]}(H^{\scriptscriptstyle \#}(\tau
G_{n(E)}'))\delta_{x}\right> \chi_{\left\{C_{x}(\omega)=\tau
G_{n(E)}'\right\}}\Big)\nonumber 
\\& 
= 
\sum_{x \in \mathcal{F}} \sum_{\tau \in
\mathcal{T}_{x}(E)}\left<\delta_{x},
U_{\tau}^{-1}\chi_{]0,\lambda^{\scriptscriptstyle \#}(G'_{n(E)})]}(H^{\scriptscriptstyle \#}(G_{n(E)}'))U_{\tau}
\delta_{x}\right> \mathbb{P}(C_{x}(\omega)=\tau
G_{n(E)}')\nonumber \\ & = \sum_{x \in
\mathcal{F}} \sum_{\tau \in \mathcal{T}_{x}(E)}
\left<\delta_{\tau^{-1}x}, \chi_{]0,\lambda^{\scriptscriptstyle \#}(G'_{n(E)})]}(H^{\scriptscriptstyle \#}
(G_{n(E)}'))\delta_{\tau^{-1}x}\right>
\mathbb{P}(C_{\tau^{-1}x}(\omega)=G_{n(E)}')\nonumber
\end{align*}
Here we used the fact that for any subgraph $G'$ and any element
$\tau$ of the  group $\Gamma$, $H^{\scriptscriptstyle \#}(\tau G') =
U_{\tau}^{-1}H^{\scriptscriptstyle \#}(G')U_{\tau}$, where $U_{\tau}$ is a unitary
operator on $\ell^{2}(G)$ defined by $U_{\tau}f(x):=f(\tau x)$.
The operators $U_{\tau}$ have the property $U_{\tau}\delta_{x} =
\delta_{\tau^{-1}x}$. In the following we introduce a sum over all 
possible values of $\tau^{-1}x$ and see that \eqref{eq:lb} is equal to
\begin{align*}
& \sum_{y \in
G_{n(E)}'} \left<\delta_{y}, \chi_{]0,\lambda^{\scriptscriptstyle \#}(G'_{n(E)})]}(H^{\scriptscriptstyle \#}
(G_{n(E)}'))\delta_{y}\right> \mathbb{P}(C_{y}(\omega)= G_{n(E)}')
\sum_{x \in \mathcal{F}} \sum_{\tau \in \mathcal{T}_{x}(E) \atop
y=\tau^{-1}x} 1 .
\end{align*}
Note that for each vertex $y$ in $G_{n(E)}'$ there exist a vertex $x
\in \mathcal{F}$ and an automorphism $\tau \in \mathcal{T}_{x}(E)$
which maps $y$ to $x$, 
thus $\displaystyle \sum_{x \in \mathcal{F}} \sum_{\tau \in \mathcal{T}_{x}(E) \atop
y=\tau^{-1}x} 1 \geq 1$.
It follows that the last displayed expression can be bounded below by
\begin{align*}
& 
\mathbb{P}(G_{n(E)}' \textrm{ is a cluster in } G_\omega)\sum_{y
\in G_{n(E)}'} \left<\delta_{y},
\chi_{]0,\lambda^{\scriptscriptstyle \#}(G'_{n(E)})]}(H^{\scriptscriptstyle \#} (G_{n(E)}'))\delta_{y}\right>
\\& 
\geq \mathbb{P}(G_{n(E)}' \textrm{ is a cluster in } G_\omega).
\end{align*}
In the last step we used the fact that
$\chi_{]0,\lambda^{\scriptscriptstyle \#}(G'_{n(E)})]}(H^{\scriptscriptstyle \#} (G_{n(E)}')$ is a non-trivial
projection and  its trace is equal to the dimension of its
range which is thus greater or equal than one. Since we are considering
independent percolation on a graph of uniformly bounded vertex
degree we can find a positive constant $b_{p}$ depending only on
$p$, such that 
\[
\frac{1}{|\mathcal{F}|} \mathbb{P}(G' \textrm{ is
a cluster in } G_\omega) \geq e^{-b_{p}|G'|} 
\]
holds for any finite subgraph $G'$.
\end{proof}

\section{Bounds on eigenvalues}
\label{s-eigenvalue-bounds}

As we have seen in the previous section, for good upper and lower
bounds for the IDS we need to estimate $\lambda^{\scriptscriptstyle \#}(G')$. Lower bounds
for eigenvalues (which give upper bounds for IDS) which are
sufficient for our purposes can be given in terms of the growth
rate of the group. Recall that $B(n)$ denotes the ball in a Cayley
graph $G$, of radius $n$ around the  unit element $\iota$
and $V(n)$ stands for the volume (the number of vertices) of
$B(n)$. Also define $\phi(t):= \min \left\{n \geq 0; V(n)>t
\right\}$.

\begin{proposition}\label{prop: faber-krahn}
Let $G=(V,E)$ be a Cayley graph of a finitely generated group
$\Gamma$. For every finite connected subgraph
$G'$
\begin{equation}\label{eq: faber-krahn}
\lambda^{\scriptscriptstyle A}(G') \geq \frac{1}{128 } \, \frac{1}{k^2 \, \phi(2|G'|)^{2}}.
\end{equation}
\end{proposition}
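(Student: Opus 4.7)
My plan is to identify $\lambda^{\scriptscriptstyle A}(G')$ as a Dirichlet eigenvalue, and then run a co-area/Cauchy-Schwarz Cheeger argument combined with the Coulhon-Saloff-Coste isoperimetric inequality on Cayley graphs. The crucial preliminary observation is that, by the quadratic form identities in \eqref{e-quadratic-forms}, extension by zero $\iota:\ell^{2}(V')\hookrightarrow\ell^{2}(V)$ is an isometry satisfying $\la H^{\scriptscriptstyle A}(G')\phi,\phi\ra=\la H\iota\phi,\iota\phi\ra=\sum_{\{x,y\}\in E}|\iota\phi(x)-\iota\phi(y)|^{2}$. Hence $\lambda^{\scriptscriptstyle A}(G')$ is exactly the first Dirichlet eigenvalue of the full Cayley-graph Laplacian $H$ on $V'$, and by Perron-Frobenius its minimizer $\psi$ is non-negative with support in $V'$.

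Next I would invoke the Coulhon-Saloff-Coste isoperimetric inequality for Cayley graphs of finitely generated groups, which states that every finite $S\subset V$ satisfies an estimate of the form
\[
|\partial_{E} S|\;\geq\;\frac{|S|}{c_{k}\,\phi(2|S|)},
\]
with an explicit constant $c_{k}$ linear in $k$. This translates the volume-growth information encoded by $\phi$ into expansion. The proof of this inequality is the classical ball-covering argument: any finite $S$ of radius $\geq\phi(|S|)$ must lose a definite fraction of its mass across translates, otherwise it would violate the assumed volume growth.

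Now I would apply the discrete co-area formula to $\psi^{2}$. Letting $\Omega_{t}=\{\psi>t\}\subseteq V'$ and summing over unoriented edges,
\[
\sum_{\{x,y\}\in E}\bigl|\psi(x)^{2}-\psi(y)^{2}\bigr| \;=\; 2\int_{0}^{\infty}t\,|\partial_{E}\Omega_{t}|\,dt.
\]
For the upper bound I factor $\psi^{2}(x)-\psi^{2}(y)=(\psi(x)-\psi(y))(\psi(x)+\psi(y))$ and use Cauchy-Schwarz together with $\sum_{\{x,y\}\in E}(\psi(x)+\psi(y))^{2}\le 2k\|\psi\|_{2}^{2}$ (by $k$-regularity), obtaining
\[
\sum_{\{x,y\}\in E}\bigl|\psi(x)^{2}-\psi(y)^{2}\bigr|
\;\leq\;\sqrt{\la H\psi,\psi\ra}\cdot\sqrt{2k}\,\|\psi\|_{2}
\;=\;\sqrt{2k\,\lambda^{\scriptscriptstyle A}(G')}\;\|\psi\|_{2}^{2}.
\]
For the lower bound I use $|\Omega_{t}|\leq |V'|$ together with monotonicity of $\phi$ to get $|\partial_{E}\Omega_{t}|\geq |\Omega_{t}|/(c_{k}\phi(2|V'|))$, then integrate and use the layer-cake identity $\int_{0}^{\infty}t|\Omega_{t}|\,dt=\tfrac12\|\psi\|_{2}^{2}$.

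Chaining the two bounds gives an inequality of the form $\|\psi\|_{2}^{2}/(c_{k}\phi(2|V'|))\le \sqrt{2k\lambda^{\scriptscriptstyle A}(G')}\,\|\psi\|_{2}^{2}$, which rearranges to $\lambda^{\scriptscriptstyle A}(G')\geq \mathrm{const}/(k^{2}\phi(2|V'|)^{2})$, matching the structure of \eqref{eq: faber-krahn}. The main obstacle is purely bookkeeping: obtaining the precise numerical constant $1/128$ and the exponent $k^{2}$ requires using the Coulhon-Saloff-Coste inequality in the sharp form in which $c_{k}$ scales linearly in $k$, and tracking factors of $2$ in the Cauchy-Schwarz step and the layer-cake identity. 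The analytic content of the proof is entirely captured by the isoperimetric/co-area/Cauchy-Schwarz triad; no Weyl-type counting or amenability assumption is needed, which is consistent with the proposition being stated for arbitrary finitely generated groups.
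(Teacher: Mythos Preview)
Your approach is correct and coincides with the paper's: both reduce the eigenvalue bound to an isoperimetric input via a Cheeger-type inequality, and both draw that input from the Coulhon--Saloff-Coste inequality $8k\,|\partial_V A|\ge |A|/\phi(2|A|)$. The only difference is cosmetic: the paper quotes the co-area/Cauchy--Schwarz step as a black box (Proposition~14.1 in \cite{Woess-00}, stated for general isoperimetric profiles $\mathfrak{f}$ with constant $\kappa$, here instantiated with $\mathfrak{f}(n)=\phi(2n)$ and $\kappa=8k$), whereas you unpack that step explicitly and therefore need the Perron--Frobenius observation to secure a non-negative test function for the co-area formula on $\psi^{2}$.
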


\begin{proof}
If we prove that every non-zero $\varphi$ satisfies $\displaystyle
\frac{\left<\varphi, H^{\scriptscriptstyle A}(G')\varphi\right>}{\|\varphi\|^{2}}
\geq \frac{(128 k^2)^{-1}}{\phi(2|G'|)^{2}}$, the
inequality will follow by the mini-max principle, after taking the
infimum over all non-zero $\varphi$. The above inequality follows from
results in \cite{CoulhonSC-93} and \cite{Woess-00}. Namely in the course
of the proof of Proposition 14.1 in \cite{Woess-00}
one proves that for any $\varphi \in \ell^{2}(G)$ with finite support we
have
\begin{equation}\label{eq: faber-krahn proof 1}
\frac{D_{P}(\varphi)}{\|\varphi\|^{2}}
\geq \frac{1}{2\kappa^{2}\mathfrak{f}(|\supp \varphi|)^{2}},
\end{equation}
where $\kappa$ is a positive constant and $\mathfrak{f} \colon \mathbb{N} \to \mathbb{R}$ is non-decreasing and  
such that $\displaystyle \kappa |\partial_{E} A| \geq \frac{|A|}{\mathfrak{f}(|A|)}$ for
all finite subsets of vertices $A$. Here $\partial_{E} A$ is the edge boundary, i.e.~the set
of edges which have one end-vertex in $A$ and the other outside $A$ and $D_{P}$ is the
Dirichlet sum, which in the special case when $P$
defines the nearest neighbour simple random walk satisfies
$\displaystyle D_{P}(\varphi)=\sum_{x \sim_{G} y}|\varphi(x)-\varphi(y)|^{2}$. Note that
(\ref{e-quadratic-forms}) implies the fact that for any finite
subgraph $G'$ of a Cayley graph $G$ and any $\zeta \in
\ell^{2}(G')$ we have $\displaystyle \left<H^{\scriptscriptstyle A}(G')\zeta,\zeta\right> = \sum_{x
\sim _{G} y}|\widetilde{\zeta}(x)-\widetilde{\zeta}(y)|^{2}$, where
$\widetilde{\zeta}$ is an extension of $\zeta$ in $\ell^{2}(G)$ defined by
setting $\widetilde{\zeta}(x)$ to be equal to $0$ for every $x
\notin G'$. Thus the Dirichlet sum considered in \cite{Woess-00} satisfies
\begin{equation}\label{eq: dirichlet sum}
D_{P}(\widetilde{\zeta}) = \left<H^{\scriptscriptstyle A}(G')\zeta,\zeta\right>,
\end{equation}
in the special case where the transition matrix $P$ corresponds to a
simple nearest neighbour random walk on $G$.
On the other hand
Th\'{e}or\`{e}me 1 in \cite{CoulhonSC-93} shows that for any Cayley graph
of a finitely generated group 
\begin{equation}\label{eq: faber-krahn proof 2}
8 k \, |\partial_{V} A| \geq \frac{|A|}{\phi(2|A|)},
\end{equation}
holds for all finite subsets of vertices $A$. (Here $\partial_{V} A$ is the inner vertex boundary
of $A$, i.e.~the set of vertices in $A$ which have a neighbour outside $A$.) Since
$|\partial_{E} A| \geq |\partial_{V} A|$, the conditions of Proposition 14.1 in \cite{Woess-00}
are satisfied with $\mathfrak{f}(n)=\phi(2n)$ and so
(\ref{eq: faber-krahn proof 1}) and (\ref{eq: dirichlet sum})
together imply the desired inequality.
\end{proof}

The role of the subgraphs $G_{n}'$ from Proposition~\ref{prp: abstract
lower bounds for IDS} will be played by the balls $B(n)$. As for the
sequence $c_{n}$ from the same proposition, the next proposition will
give us a candidate.

\begin{proposition}\label{prop: upper eigen bounds}
Let $G=(V,E)$ be a Cayley graph of a finitely generated group with
polynomial growth. Then there exists a positive constant
$\beta_{D}^{+}$ such that for every positive integer $n$ we have
\begin{equation}\label{eq: upper eigen bounds}
\lambda^{\scriptscriptstyle D}(B(n)) \leq \frac{\beta_{D}^{+} k}{n^{2}}.
\end{equation}
\end{proposition}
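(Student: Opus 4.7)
The plan is to apply the variational (min-max) principle with an explicit radial test function. From \eqref{e-quadratic-forms},
\[
\langle H^{\scriptscriptstyle D}(B(n))\phi,\phi\rangle = 2\sum_{x \in B(n)}(k-\deg_{B(n)}(x))|\phi(x)|^2 + \sum_{\{x,y\}\in E(B(n))}|\phi(x)-\phi(y)|^2,
\]
and the boundary penalty is concentrated on the outer sphere $S(n):=\{x:d(x,\iota)=n\}$, since every vertex $x$ with $d(x,\iota)\le n-1$ has all of its $k$ neighbours back in $B(n)$ and hence $\deg_{B(n)}(x)=k$.

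First I would choose the radial test function $\phi(x):=n-d(x,\iota)$ on $B(n)$. By construction $\phi\equiv 0$ on $S(n)$, so the boundary term vanishes entirely, leaving only the edge-gradient sum. Since $d(\cdot,\iota)$ is $1$-Lipschitz along edges of $G$, we have $|\phi(x)-\phi(y)|\le 1$ for every edge $\{x,y\}$ of $B(n)$, and the number of such edges is at most $\tfrac{k}{2}V(n)$. Therefore
\[
\langle H^{\scriptscriptstyle D}(B(n))\phi,\phi\rangle \le \frac{k}{2}\,V(n).
\]

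Next I would lower bound the denominator by localising to a concentric ball of half the radius: if $x\in B(\lfloor n/2\rfloor)$ then $\phi(x)\ge n/2$, and hence $\|\phi\|^2\ge (n^2/4)\,V(\lfloor n/2\rfloor)$. Inserting the two-sided polynomial-growth estimate $a\,n^d\le V(n)\le b\,n^d$ yields
\[
\lambda^{\scriptscriptstyle D}(B(n)) \le \frac{\langle H^{\scriptscriptstyle D}(B(n))\phi,\phi\rangle}{\|\phi\|^2} \le \frac{(k/2)\,b\,n^d}{(n^2/4)\,a\,(n/2)^d} = \frac{2^{d+1}b}{a}\cdot\frac{k}{n^2},
\]
so $\beta_{D}^{+}:=2^{d+1}b/a$ does the job. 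No orthogonality to a kernel is required, because $H^{\scriptscriptstyle D}(B(n))$ is in fact strictly positive: any $\phi$ in its kernel must vanish on $S(n)$ (from the boundary term) and be locally constant on edges (from the gradient term), which forces $\phi\equiv 0$ on the connected ball $B(n)$.

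There is no substantial obstacle here; the computation is entirely standard, and the only delicate point is ensuring that the test function vanishes on $S(n)$ so as to cancel the boundary potential $W^{b.c.}$ hidden inside $H^{\scriptscriptstyle D}$. This is what distinguishes the Dirichlet case from $H^{\scriptscriptstyle A}$ and $H^{\scriptscriptstyle N}$, where a similarly constructed radial test function would also work but without the need to kill a boundary term.
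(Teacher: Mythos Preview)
Your proof is correct and follows essentially the same approach as the paper: a radial test function vanishing on $S(n)$, combined with the doubling property from polynomial growth to compare $V(n)$ with $V(\lfloor n/2\rfloor)$. The only cosmetic difference is that the paper truncates its cone to be constant on $B(\lfloor n/2\rfloor -1)$ while you use the full linear cone $n-d(\iota,\cdot)$; this has no effect on the estimate (indeed your numerator bound $\tfrac{k}{2}V(n)$ is slightly sharper than the paper's $kV(n)$), and your added remark explaining why $H^{\scriptscriptstyle D}(B(n))$ has trivial kernel is a nice clarification the paper leaves implicit.
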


\begin{proof}
From the mini-max principle we know
\begin{equation}\label{eq: upper eigen bounds 1}
\lambda^{\scriptscriptstyle D}(B(n)) \leq \frac{\left<\varphi,
H^{\scriptscriptstyle D}(B(n))\varphi\right>}{\|\varphi\|^{2}},
\end{equation}
for every $\varphi \in \ell^{2}(B(n))$. For a test function
$\varphi$ use the radially  symmetric function defined in
the following way:
$$
\varphi(x):=\left\{\begin{array}{l l} n-d(\iota,x), & \textrm{ if }
d(\iota,x) \in \left\{\lfloor n/2 \rfloor, \dots, n\right\} \\
\lceil n/2 \rceil, & \textrm{ if } d(\iota,x)< \lfloor n/2 \rfloor \\
0, & \textrm{ else. }\end{array}\right.
$$
Now we have
\[
\left<\varphi, H^{\scriptscriptstyle D}(B(n))\varphi\right> = \sum_{[x,y] \in E \atop
x,y \in B(n)} |\varphi(x)-\varphi(y)|^{2} + 2 \!\!\!\!\!\!\sum_{(x,y) \in E
\atop x \in B(n), y \notin B(n)}|\varphi(x)-\varphi(y)|^{2} \leq
kV(n)
\]
$$
\|\varphi\|^{2} = \sum_{x \in B(n)}|\varphi(x)|^{2} \geq \lceil
n/2 \rceil ^{2} V(\lfloor n/2 \rfloor).
$$
Inserting these two inequalities into (\ref{eq: upper eigen bounds
1}) and using the fact that $V(n)$ grows polynomially one easily
obtains (\ref{eq: upper eigen bounds}).
\end{proof}

Now we give bounds for the eigenvalues for the combinatorial
Laplacian on quasi-transitive graphs. The first one is a variant of 
the Cheeger inequality 

\begin{proposition}\label{prop: cheeger}
Let $G=(V,E)$ be a quasi-transitive graph with vertex
degree bounded  by $\widetilde k$. For a finite subgraph $G'=(V',E')$ 
denote by $\diam G':=\max_{x,y\in V'} d(x,y)$ its diameter. Then we have
\begin{equation}\label{eq: cheeger}
\lambda^{\scriptscriptstyle N}(G') \geq \frac{2}{  |G'| \, \diam G'}\geq \frac{2}{  |G'|^{2}}.
\end{equation}
\end{proposition}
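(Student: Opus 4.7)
The plan is to use the min-max principle: since $H^{\scriptscriptstyle N}(G')$ is non-negative and the variational characterisation of $\lambda^{\scriptscriptstyle N}(G')$ is
\[
\lambda^{\scriptscriptstyle N}(G') = \inf\left\{ \frac{\langle H^{\scriptscriptstyle N}(G')\varphi,\varphi\rangle}{\|\varphi\|^{2}} \,:\, \varphi \perp \ker H^{\scriptscriptstyle N}(G'), \ \varphi \neq 0 \right\},
\]
it suffices to produce a lower bound on this Rayleigh quotient for all admissible test functions $\varphi$. Since $H^{\scriptscriptstyle N}(G')$ decomposes as a direct sum over connected components of $G'$ and since $|C|\le |G'|$ and $\diam C\le\diam G'$ for each component $C$, it is enough to treat the case where $G'$ is connected. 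In that case $\ker H^{\scriptscriptstyle N}(G')$ consists exactly of the constant functions on $V'$, so the admissible test functions are those $\varphi$ with $\sum_{x\in V'}\varphi(x)=0$.

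The first key step is the elementary identity
\[
\sum_{x,y\in V'}|\varphi(x)-\varphi(y)|^{2} \;=\; 2|V'|\,\|\varphi\|^{2} - 2\Big|\sum_{x\in V'}\varphi(x)\Big|^{2} \;=\; 2|V'|\,\|\varphi\|^{2}
\]
for real-valued $\varphi$ with vanishing mean (the complex case is the same after polarisation). The second key step is to bound each term on the left by the Dirichlet energy. For any two vertices $x,y\in V'$ choose a shortest path $x=x_{0},x_{1},\dots,x_{d}=y$ with $d=d_{G'}(x,y)\le \diam G'$; then by the Cauchy--Schwarz inequality
\[
|\varphi(x)-\varphi(y)|^{2}
= \Big|\sum_{i=0}^{d-1}\bigl(\varphi(x_{i+1})-\varphi(x_{i})\bigr)\Big|^{2}
\le d\sum_{i=0}^{d-1}|\varphi(x_{i+1})-\varphi(x_{i})|^{2}
\le \diam G' \cdot \langle H^{\scriptscriptstyle N}(G')\varphi,\varphi\rangle,
\]
where in the last step one uses that the edges of the chosen path lie in $E'$ and one discards the other non-negative terms appearing in the quadratic form \eqref{e-quadratic-forms}.

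Summing this over all ordered pairs $(x,y)\in V'\times V'$ and combining with the variance identity yields
\[
2|V'|\,\|\varphi\|^{2} \;\le\; |V'|^{2}\, \diam G'\cdot \langle H^{\scriptscriptstyle N}(G')\varphi,\varphi\rangle,
\]
so $\langle H^{\scriptscriptstyle N}(G')\varphi,\varphi\rangle/\|\varphi\|^{2}\ge 2/(|G'|\diam G')$, which is the first inequality in \eqref{eq: cheeger}. The second inequality is then immediate since $\diam G'\le |G'|-1<|G'|$ for any finite connected subgraph. The only obstacle worth mentioning is the bookkeeping in the reduction to the connected case (the kernel of $H^{\scriptscriptstyle N}(G')$ is spanned by indicator functions of connected components, so orthogonality to the kernel becomes mean-zero on each component, which is what allows the variance identity to be applied component by component); once that is handled everything is a one-line Cauchy--Schwarz estimate. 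Note that the uniform bound $\widetilde{k}$ on the vertex degree plays no role in the proof itself, it is only used implicitly via the standing assumption that $G$ is locally finite.
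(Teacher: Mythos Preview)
Your proof is correct. The paper does not actually supply its own argument for this proposition; it merely refers the reader to Lemma~1.9 in \cite{Chung-97} or Lemma~A.1 in \cite{KhorunzhiyKM-06}. What you have written is precisely the standard proof found in those references: the variance identity for mean-zero functions combined with the Cauchy--Schwarz bound along shortest paths. So there is nothing to compare---you have filled in what the paper outsources. A small cosmetic remark: the polarisation comment is unnecessary, since the variance identity $\sum_{x,y}|\varphi(x)-\varphi(y)|^{2}=2|V'|\|\varphi\|^{2}-2\bigl|\sum_{x}\varphi(x)\bigr|^{2}$ holds verbatim for complex-valued $\varphi$ (and in any case eigenfunctions of the real symmetric matrix $H^{\scriptscriptstyle N}(G')$ may be taken real).
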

For the proof see Lemma 1.9 in \cite{Chung-97} or Lemma A.1 in \cite{KhorunzhiyKM-06}.\\

The role of the subgraphs $G_{n}'$ from Proposition~\ref{prp:
abstract lower bounds for IDS}, in the case of the Neumann
Laplacian, will be played by linear subgraphs. A linear subgraph
$L_{n} \subset G$ of length $n$ is the subgraph induced by a path
$v_{1},v_{2},\dots,v_{n+1}$ in the graph $G$, such that the
distance between $v_{i}$ and $v_{j}$ is equal to $|j-i|$, for
every $1 \leq i,j \leq n+1$. Notice that for every connected
infinite graph $G$ and every $n \in \NN$ there exists a linear
subgraph of length $n$ in $G$. To see this fix an arbitrary vertex
$w_{0}$ and take any vertex $w_{n}$ on the sphere of radius $n$
with centre in $w_{0}$ (this sphere is obviously non-empty). Now
take a shortest path $(w_{0}, w_{1}, \dots w_{n-1}, w_{n})$
between the vertices $w_{0}$ and $w_{n}$. Clearly, the vertices
$\left\{w_{0}, w_{1}, \dots w_{n}\right\}$ are vertices of a
linear subgraph $L_{n}$.

\begin{proposition}\label{prop: upper neum eigen bounds}
Let $G=(V,E)$ be a quasi-transitive graph with bounded vertex
degree. For any integer $n$ we have
\begin{equation}\label{eq: upper neum eigen bounds}
\lambda^{\scriptscriptstyle N}(L_{n}) \leq \frac{12}{n^{2}}.
\end{equation}
\end{proposition}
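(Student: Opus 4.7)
The plan is to apply the mini-max principle to the combinatorial Laplacian on $L_n$. Since $L_n$ is connected, $H^{\scriptscriptstyle N}(L_n)$ has simple kernel spanned by the constant function $\mathbf{1}$, so the lowest nonzero eigenvalue is given by
\[
\lambda^{\scriptscriptstyle N}(L_n) = \inf\left\{ \frac{\langle H^{\scriptscriptstyle N}(L_n)\varphi,\varphi\rangle}{\|\varphi\|^2} \,\Big|\, \varphi \in \ell^2(L_n)\setminus\{0\},\ \sum_{i=1}^{n+1}\varphi(v_i)=0 \right\}.
\]
By the expression \eqref{e-quadratic-forms} for the quadratic form, $\langle H^{\scriptscriptstyle N}(L_n)\varphi,\varphi\rangle = \sum_{i=1}^{n} |\varphi(v_{i+1})-\varphi(v_i)|^2$ for any $\varphi$ on $L_n$.

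The natural test function to choose is the centered linear function $\varphi(v_i) := i - \frac{n+2}{2}$ for $i=1,\ldots,n+1$. This is orthogonal to $\mathbf{1}$ by construction. For the numerator one gets $|\varphi(v_{i+1})-\varphi(v_i)|^2 = 1$ for each of the $n$ edges, hence $\langle H^{\scriptscriptstyle N}(L_n)\varphi,\varphi\rangle = n$. For the denominator, a standard computation of the variance of the uniform distribution on $\{1,\ldots,n+1\}$ yields
\[
\|\varphi\|^2 = \sum_{i=1}^{n+1}\left(i-\tfrac{n+2}{2}\right)^2 = \frac{(n+1)\,n\,(n+2)}{12}.
\]
Combining these two computations, the Rayleigh quotient equals $\frac{12}{(n+1)(n+2)} \leq \frac{12}{n^2}$, which gives the claimed bound.

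There is no real obstacle here; the only mild subtlety is the observation that for the combinatorial (``Neumann'') Laplacian on a connected finite graph the constant function lies in the kernel, so $\lambda^{\scriptscriptstyle N}$ is obtained by restricting the Rayleigh quotient to mean-zero functions. Note that unlike the Dirichlet or adjacency variants treated earlier, there is no boundary potential in $H^{\scriptscriptstyle N}(L_n)$, so the test function need not vanish at the endpoints of the path, which is precisely what allows the Rayleigh quotient to be as small as $O(1/n^2)$ with an explicit constant. The bound is uniform in the ambient graph $G$, depending only on the fact that $L_n$ is an isometric path.
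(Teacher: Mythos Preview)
Your proof is correct and follows exactly the same approach as the paper: apply the mini-max principle with the linear test function ranging from $-n/2$ to $n/2$ along $L_n$, which is orthogonal to the constants. The paper merely outsources the explicit computation of the Rayleigh quotient to \cite{KirschM-06}, whereas you carry it out directly; your calculation $\langle H^{\scriptscriptstyle N}(L_n)\varphi,\varphi\rangle/\|\varphi\|^2 = 12/[(n+1)(n+2)] \le 12/n^2$ is exactly what is needed.
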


\begin{proof}
We will again use the mini-max principle, i.e.~$\displaystyle
\lambda^{\scriptscriptstyle N}(L_{n}) \leq \frac{\left<\varphi,
H^{\scriptscriptstyle N}(L_{n})\varphi\right>}{\|\varphi\|^{2}}$,  for all
$\varphi \in \ell^{2}(L_{n})$, which are orthogonal to the kernel
of the operator $H^{\scriptscriptstyle N}(L_{n})$. Since the kernel is one
dimensional and contains only constant functions, the condition
that $\varphi$ is orthogonal to the kernel is equivalent to
$\displaystyle \sum_{x \in L_{n}} \varphi (x) = 0$. One obtains
(\ref{eq: upper neum eigen bounds}) by inserting the function
which grows linearly along $L_{n}$ having the value $-n/2$ on one
end-vertex and $n/2$ on the other, see Lemma 2.6 in \cite{KirschM-06}.
\end{proof}

\section{Proofs of the theorems for groups of polynomial growth and quasi-transitive
graphs}\label{s-polynomial-growth} 
We insert the eigenvalue bounds
from the previous section into Propositions~\ref{prp: abstract
upper bounds for IDS} and~\ref{prp: abstract lower bounds for IDS}
to obtain the estimates on the IDS stated in Theorems~\ref{thm:
main Dirichlet} and~\ref{thm: main Neumann}.

\begin{proof}[Proof of Theorem~\ref{thm: main Dirichlet}]
First we prove the general upper bound.
Define the function $\displaystyle g(s):=\frac{1}{128 k^{2}\phi(2s)^{2}}$, which is a right-continuous, non-increasing function which converges to $0$ as $s$ approaches to $\infty$. Now define $\displaystyle g^{*}(E):=\min\left\{s; g(s) \leq E\right\}$. 
We note that $\phi(V(n))=n+1$ for $ n \in \NN$ and estimate
\begin{align}\label{eq: proof main Dirichlet 0}
g^{*}(E)&=\min\left\{s; \phi(2s) \geq \frac{1}{8\sqrt{2}k}E^{-1/2}\right\} \nonumber \\ 
& = \frac{1}{2}V\Big(\Big\lceil \frac{1}{8\sqrt{2}k}E^{-1/2} \Big\rceil -1\Big) \nonumber \\ & \geq \frac{1}{2} V\Big(\frac{1}{8\sqrt{2}k}E^{-1/2}-1\Big).
\end{align}
Now take a sequence of continuous decreasing functions $(f_{n})_{n}$ converging pointwise to $g$
such that for every integer $n$ we have: 
\begin{align*}
f_{n}(s) &\leq g(s) \textrm{ for every positive } s \textrm{ and } 
\\
f_{n}(s) &=g(s) \textrm{ for every } s \textrm{ at which } g \textrm{ is not continuous. }
\end{align*}
Clearly $f_{n}^{-1}(E)=g^{*}(E)$ for every $E$ in the image of $g$ and $\lim_{n \to \infty}f^{-1}_{n}(E) = g^{*}(E)$ if $E$ is not in the image of $g$. Having in view Proposition~\ref{prop: faber-krahn}, 
every $f_{n}$ satisfies the conditions of Proposition~\ref{prp: abstract upper bounds for IDS}. 
Now \eqref{eq: main dirichlet arbitrary} follows.

For the first inequality in (\ref{eq: main dirichlet poly}) use
Proposition~\ref{prp: abstract lower bounds for IDS} with
$G_{n}':=B(n)$ and $c_{n}:=\frac{\beta_{D}^{+} k}{n^{2}}$, where
$\beta_{D}^{+}$ is the constant from Proposition~\ref{prop: upper
eigen bounds}. When $E$ approaches $0$ from above, $n(E)
E^{1/2}$ is bounded from above by a constant and thus the same is true for
$|G_{n(E)}'|E^{d/2}$. Now using the fact that $N^{\scriptscriptstyle D}(0)=0$ the
result follows directly from Proposition~\ref{prp: abstract lower
bounds for IDS}.

For the second inequality in (\ref{eq: main dirichlet poly}) we refer
to Remark~\ref{r-basicproperties}.
By the polynomial growth of $V(n)$ the third inequality follows directly from \eqref{eq: main dirichlet arbitrary}.

\vspace{5 mm}

Now we prove (\ref{eq: main dirichlet super}). By $N^{\scriptscriptstyle D} \leq N^{\scriptscriptstyle A}$
the divergence in (\ref{eq: main dirichlet super}) has to be proven
only for the case of the adjacency Hamiltonian. 
Using \eqref{eq: main dirichlet arbitrary} we get
\begin{equation}\label{eq: main Dirichlet proof 0-1}
\frac{\ln |\ln N^{\scriptscriptstyle A}(E)|}{|\ln E|} \geq \frac{\ln \frac{a_{p}}{2}}{|\ln E|} + \frac{\ln V\Big(\frac{1}{8 \sqrt 2 k} E^{-1/2}-1\Big)}{|\ln E|}.
\end{equation}
In the case of superpolynomial growth we have 
$\displaystyle \lim_{n\to\infty}\frac{\ln V(cn)}{\ln n}=\infty$ for any $c >0$ and thus 
\[
\lim_{E \searrow 0}\displaystyle \frac{\ln V\Big(\frac{1}{8 \sqrt 2 k} E^{-1/2}-1\Big)}{|\ln E|} 
=\infty.
\]
Now the claim follows from \eqref{eq: main Dirichlet proof 0-1}.

\end{proof}

\begin{proof}[Proof of Theorem~\ref{thm: main Neumann}]
By Proposition~\ref{prop: cheeger} we see that the assumptions of
Proposition~\ref{prp: abstract upper bounds for IDS} are satisfied
with $\displaystyle f(s):=\frac{2}{s^{2}}$. Moreover,
by Proposition~\ref{prop: upper neum eigen bounds} we can use
Proposition~\ref{prp: abstract lower bounds for IDS} with
$G_{n}'=L_{n}$ and $\displaystyle
c_{n}=\frac{12}{n^{2}}$. Now the bounds in (\ref{eq: main
neumann}) follow directly.
\end{proof}

As for the periodic case, the formulae for the limits in Theorem~\ref{thm:
full graph} are not new, see for instance Lemma 2.46.~in
\cite{Lueck-02}.  There the operators under consideration are introduced
using the language of homological algebra.  The idea of the proof is to use
a Tauber-type Lemma to turn the return probability estimates from \cite{Varopoulos-87}
into bounds for the IDS.
Let us be a bit more detailed:
Consider the scaled adjacency operator $\frac{1}{k}A(G)$ and its
integrated density of states $N_{\frac{1}{k}A}$. Denote by $\mathbb{P}_{o}(X_{n}=o)$ the return
probability after $n$ steps of the simple nearest neighbour random walk $(X_{n})$
which started at $o$. It follows that
$\mathbb{P}_{o}(X_{n}=o)
=\int_{\mathbb{R}}t^{n}dN_{\frac{1}{k}A}(t)$. Now it is possible
to give sharp bounds on the behaviour of $N_{\frac{1}{k}A}$ near
the upper spectral edge (i.e.~$E=1$) using estimates of  the return
probabilities of the simple random walk. 
These arguments have the flavour of a Tauberian theorem.
In the case of Cayley graphs of groups with polynomial growth the
probabilities $\mathbb{P}_{o}(X_{n}=o)$ behave like $n^{-d/2}$
(see \cite{Varopoulos-87} or Corollary 14.5 and Theorems 14.12 and 14.19 in
\cite{Woess-00}). Now the desired bounds for $N_{\per}$ follow.

The idea to relate the IDS with the return probabilities of the
simple random walk will be important for studying the same problem
in the case of Lamplighter groups. Here we shall refer to results
in \cite{Oguni-07}.

\section{Estimates for Lamplighter groups}
\label{s-lamplighter}
 In this section we derive upper and lower bounds on the
IDS for a particular class of amenable groups of superpolynomial
growth, namely for Lamplighter groups.

Fix a positive integer $m \geq 2$. The Lamplighter group is
defined as the wreath product $\mathbb{Z}_{m} \wr \mathbb{Z}$.
In other words, elements of the group are ordered pairs
$(\varphi,x)$, where $\varphi$ is a function $\varphi \colon
\mathbb{Z} \to \mathbb{Z}_{m}$ with finite support and $x\in
\mathbb{Z}$. The multiplication is given by $(\varphi_{1},x_{1})
\ast (\varphi_{2},x_{2}) := (\varphi_{1} + \varphi_{2}(\cdot -
x_{1}),x_{1}+x_{2})$.
We shall use the following notation. For $x \in \mathbb{Z}$ let
$\delta_{x}$ denote the function which has value $1$ at $x$ and $0$
everywhere else. The zero function will be denoted by
$\mathbf{0}$.

Lamplighter groups are examples of amenable groups with
exponential growth. It suffices to prove these two properties for
some Cayley graph of the Lamplighter group. Consider the Cayley
graph of the Lamplighter group $\mathbb{Z}_{m} \wr \mathbb{Z}$,
defined with respect to the set of generators $\big\{(\mathbf{0},
\pm 1), (k\delta_{0},0); k \in
\mathbb{Z}_{m} \backslash \left\{0\right\}\big\}$.
To prove amenability one only has to notice that the sequence
of sets
$$
\Big(\left\{(\varphi,x); \supp \varphi \subseteq \left\{-n,\dots,
n\right\}, x \in \left\{-n,\dots,n\right\} \right\}\Big)_{n}
$$
is a F\o lner sequence.
Exponential growth follows directly from the fact that for any
function $\varphi$ with support in $\left\{1,2,\dots,n\right\}$
one is able reach the vertex $(\varphi,n)$, from the zero element
in at most $2n$ steps, and so ball of radius $2n$ has at least
$m^{n}$ elements.

Using the same ingredients as in the case of groups with
polynomial growth we now prove the upper bound in (\ref{eq:
lamplighter bounds}).

\begin{proof}[Proof of the upper bound from Theorem~\ref{thm: lamplighter upper bounds}]
Using the fact that the growth of the Lamplighter group is exponential,
the upper bound from \eqref{eq: lamplighter bounds}
follows directly from \eqref{eq: main dirichlet arbitrary}.

\end{proof}

The lower bound in Theorem~\ref{thm: lamplighter upper bounds}
requires an additional step.  In the proof we shall first
prove the claimed estimate in the case of a particular generator
set and then we shall show how to generalise the result to
arbitrary Cayley graphs.

For an arbitrary generator set $\KKK$ of $\mathbb{Z}_{m} \wr
\mathbb{Z}$ denote by $(\mathbb{Z}_{m} \wr \mathbb{Z})_{\KKK}$ the
Cayley graph induced by the generator set $\KKK$. Also if $V'$ is a
subset of $\mathbb{Z}_{m} \wr \mathbb{Z}$ denote by $G(V',\KKK)$ the
subgraph of $(\mathbb{Z}_{m} \wr \mathbb{Z})_{\KKK}$ induced by the
vertex set $V'$.

Define the following symmetric set of generators
\begin{equation}\label{eq: generators for the horocyclic}
\KKK_{0}:=\left\{(l\cdot\delta_{1},1), l \in \mathbb{Z}_{m}\right\}
\cup \left\{(l\cdot\delta_{0},-1), l \in \mathbb{Z}_{m}\right\}.
\end{equation}
As explained in Section 2 of \cite{Woess-05} the Cayley graph
$(\mathbb{Z}_{m}\wr\mathbb{Z})_{\KKK_{0}}$ is the horocyclic product of two $(m+1)$-regular trees.
We will briefly sketch the necessary definitions and results. For a
comprehensive introduction and a graphical illustration of horocyclic
products of trees we refer to \cite{BartholdiW-05}.

Let $T=(V,E)$ be a $(m+1)$-regular rooted tree with graph metric
$d$. Let $\xi$ be an arbitrary but fixed end. (In the case of trees,
an end is an infinite 
path from the root $o$ in which vertices do not repeat.) For each
vertex $x$ there is the unique path $\gamma_{x}$ from $o$ to $x$.
Denote the intersection of the paths $\gamma_{x}$ and $\xi$, that is the sequence of edges which lie
both in $\gamma_{x}$ and $\xi$, by $\gamma_{x} \cap \xi$. Now the \emph{Busemann function}
of the tree $T$ (with respect to the root $o$ and the end $\xi$)
is defined as $\mathfrak{h} \colon V \to \mathbb{Z}$,
$\mathfrak{h}(x):=|\gamma_{x}|-2|\gamma_{x}\cap\xi|$. For two
vertices $x$ and $y$ which satisfy $\mathfrak{h}(y) \geq
\mathfrak{h}(x)$ and $d(x,y)=\mathfrak{h}(y) - \mathfrak{h}(x)$ we
shall write $x \leq y$.

Assume now that we are given two $(m+1)$-regular trees $T_{1}$ and
$T_{2}$ with Busemann functions $\mathfrak{h}_{1}$ and
$\mathfrak{h}_{2}$ respectively. The \emph{horocyclic product} of
the trees $T_{1}$ and $T_{2}$ is defined as the graph whose vertex
set is given by $\left\{(x_{1},x_{2}); x_{i} \in T_{i},
\mathfrak{h}(x_{1}) + \mathfrak{h}(x_{2}) = 0\right\}$, with two
vertices $(x_{1},x_{2})$ and $(x_{1}',x_{2}')$ adjacent if $x_{i}$
and $x_{i}'$ are adjacent in $T_{i}$ for $i=1,2$. The choice of a
root and an end in the definition is irrelevant since all
horocyclic product of two given trees are mutually isomorphic. As
we mentioned before, the Cayley graph $(\mathbb{Z}_{m} \wr
\mathbb{Z})_{\KKK_{0}}$ is isomorphic to the horocyclic product of
two $(m+1)$-regular trees.

The spectrum of the full Laplace operator on the graph $(\mathbb{Z}_{m} \wr \mathbb{Z})_{\KKK_{0}}$
is pure point, with eigenfunctions having only finite support.
This was shown for the Lamplighter group $\mathbb{Z}_{2} \wr \mathbb{Z}$
in \cite{GrigorchukZ-01} and for more general wreath products in \cite{DicksS-02}.
Here we shall follow the methods from \cite{BartholdiW-05} where the
same facts are proven for Diestel-Leader graphs, which include certain Cayley
graphs of the Lamplighter groups $\mathbb{Z}_{m} \wr \mathbb{Z}$
as a particular case. Moreover, there  the spectrum of the
Laplace operator restricted to certain subgraphs called
tetrahedrons is calculated. This is where the representation of
$(\mathbb{Z}_{m}\wr\mathbb{Z})_{\KKK_{0}}$ as a horocyclic
product becomes essential.

Assume we are given a horocyclic product of two $(m+1)$-regular
trees $T_{1}$ and $T_{2}$ with Busemann functions
$\mathfrak{h}_{1}$ and $\mathfrak{h}_{2}$ and graph metrics
$d_{1}$ and $d_{2}$ respectively. Fix a positive integer $n$ and
take two vertices $x_{1} \in T_{1}$ and $x_{2} \in T_{2}$ such
that $\mathfrak{h}_{2}(x_{2})=-\mathfrak{h}_{1}(x_{1})-n$. Now the
\emph{tetrahedron} $\SSS_n$ with height $n$ is defined as the
subgraph of the horocyclic product of $T_{1}$ and $T_{2}$ induced
by the set of vertices $\left\{(x'_{1},x'_{2})\in T_1\times T_2;
\mathfrak{h}_{1}(x_{1}') + \mathfrak{h}_{2}(x_{2}') = 0, x_{i} \leq x'_{i}, 1=1,2\right\}$. 
Note that we do not
need to specify the vertices $x_{1}$ and $x_{2}$ in the definition
of the tetrahedron, since all tetrahedra with height $n$ are
isomorphic.

Corollary 1 and Proposition 1 from \cite{BartholdiW-05} specify
certain eigenvalues for the Laplacian restricted to tetrahedron
with height $n$ among which is $2m(1-\cos \frac{\pi}{n})$.
Moreover there exist an eigenfunction corresponding to this
eigenvalue which vanishes on the inner vertex boundary of the
tetrahedron, so $2m(1-\cos\frac{\pi}{n})$ is an eigenvalue of the
operators $H^{\scriptscriptstyle \#}(\SSS_n)$ for $\# = N,A,D$. This gives us upper
bounds on the lowest eigenvalue of $H^{\scriptscriptstyle D}(\SSS_n)$ which are precise
enough to lead to the lower bounds for the IDS given in  (\ref{eq:
lamplighter bounds}).

\begin{proof}[Proof of the lower bound from Theorem~\ref{thm: lamplighter upper
bounds}] First we shall consider the Cayley graph $(\mathbb{Z}_{m}
\wr \mathbb{Z})_{\KKK_{0}}$. Again we shall use Proposition~\ref{prp:
abstract lower bounds for IDS}  for $\# = D$. We  set
$G_{n}'=\SSS_n$. It is easy to see that $|\SSS_n|=(n+1)m^{n}$.
Moreover $\displaystyle \lambda^{\scriptscriptstyle D}(\SSS_n) \leq 2m(1-\cos\frac{\pi}{n})
\leq \frac{m\pi^{2}}{n^{2}}$ and thus we can set $\displaystyle
c_{n}=\frac{m\pi^{2}}{n^{2}}$. Proposition~\ref{prp: abstract
lower bounds for IDS} now gives the desired result.

Now take an arbitrary generator set $\KKK$ and consider the
corresponding Cayley graph $(\mathbb{Z}_{m} \wr \mathbb{Z})_{\KKK}$.
Let $V_{n}$ be a set of vertices which induces a tetrahedron with
height $n$ in the Cayley graph $(\mathbb{Z}_{m} \wr
\mathbb{Z})_{\KKK_{0}}$. The same set of vertices need not be
connected in $(\mathbb{Z}_{m} \wr \mathbb{Z})_{\KKK}$ and thus the
induced subgraph in $(\mathbb{Z}_{m} \wr \mathbb{Z})_{\KKK}$ will not
be a good candidate for $G_{n}'$ in Proposition~\ref{prp: abstract
lower bounds for IDS}. For this reason we consider a thickening of
this set defined by $\displaystyle V_{n,R}:=\cup_{x \in
V_{n}}B_{\KKK}(x,R)$, where $B_{\KKK}(x,R)$ is the ball in
$(\mathbb{Z}_{m} \wr \mathbb{Z})_{\KKK}$ of radius $R$ with centre in
$x$. Here $R$ is a positive integer, large enough so that the set
$V_{n,R}$ is connected in $(\mathbb{Z}_{m} \wr \mathbb{Z})_{\KKK}$.
(We can take $R$ equal to the maximal distance in $(\mathbb{Z}_{m}
\wr \mathbb{Z})_{\KKK}$ between vertices which were neighbours in
$(\mathbb{Z}_{m} \wr \mathbb{Z})_{\KKK_{0}}$.) The set $V_{n,R}$
induces a connected subgraph $G(V_{n,R},\KKK)$ of $(\mathbb{Z}_{m}
\wr \mathbb{Z})_{\KKK}$. The volume of $G(V_{n,R},\KKK)$ is bounded
above by a constant times $|V_{n}|=(n+1)m^{n}$, where for the
constant we can take the volume of $B_{\KKK}(x,R)$.

Next we will prove that
\begin{equation}\label{eq: proof generators for horocyclic 1}
\lambda^{\scriptscriptstyle D}(G(V_{n,R},\KKK)) \leq \varrho \lambda^{\scriptscriptstyle A}(\SSS_n),
\end{equation}
for all $n$ and some positive constant $\varrho$. Having in mind
that $2m(1-\cos\frac{\pi}{n})$ is in the spectrum of
$H^{\scriptscriptstyle A}(\SSS_n)$ the desired estimate will follow with the choice
$G_{n}'=G(V_{n,R},\KKK)$ and $c_{n}=\varrho \frac{m\pi^2}{n^{2}}$.

For each function $\varphi \in \ell^{2}(V_{n})$ define the
extension $\widetilde{\varphi}$ to $V_{n,R}$ by setting
$\widetilde{\varphi}(x) = 0$ for all $x \in V_{n,R} \backslash
V_{n}$. Theorem 3.2 in \cite{Woess-00} implies
\begin{equation}\label{eq: proof generators for horocyclic 2}
\left<H^{\scriptscriptstyle D}(G(V_{n,R},\KKK))\widetilde{\varphi},\widetilde{\varphi}\right>
=\left<H^{\scriptscriptstyle A}(G(V_{n,R},\KKK))\widetilde{\varphi},\widetilde{\varphi}\right>
\leq \varrho
\left<H^{\scriptscriptstyle A}(G(V_{n,R},\KKK_{0}))\widetilde{\varphi},\widetilde{\varphi}\right>,
\end{equation}
for some positive constant $\varrho$. 
(To see this consider the special case of Theorem 3.2
in \cite{Woess-00} where the supporting graph is
$(\mathbb{Z}_{m} \wr \mathbb{Z})_{\KKK_{0}}$ and the transition matrix
$P$ defines the nearest neighbour simple random walk on
$(\mathbb{Z}_{m} \wr \mathbb{Z})_{\KKK}$ and use (\ref{eq: dirichlet sum})).

From (\ref{e-quadratic-forms}) and the fact that $\SSS_n =
G(V_{n},\KKK_{0})$ it follows that
\begin{equation}\label{eq: proof generators for horocyclic 3}
\left<H^{\scriptscriptstyle A}(G(V_{n,R},\KKK_{0}))\widetilde{\varphi},\widetilde{\varphi}\right>
= \left<H^{\scriptscriptstyle A}(G(V_{n},\KKK_{0}))\varphi,\varphi\right> =
\left<H^{\scriptscriptstyle A}(\SSS_n)\varphi,\varphi\right>.
\end{equation}
Now, having in mind $\|\varphi\| = \|\widetilde{\varphi}\|$ ,
 (\ref{eq: proof generators for horocyclic 1}) follows from (\ref{eq: proof
generators for horocyclic 2}) and (\ref{eq: proof generators for
horocyclic 3}) and the proof is finished.
\end{proof}

Now we are left to consider the case of the full Laplacian on the
Lamplighter group, i.e.~to prove Theorem~\ref{thm: lamplighter
deterministic}. As we have said before we shall use the relation
between the integrated density of states and return probabilities of
the simple random walk. To simplify expressions we shall use the
following notation. If $f$ and $g$ are two functions $f,g \colon
\mathbb{R}^{+} \to \mathbb{R}$, we shall write $f \preceq g$ if
there exist an $\varepsilon >0$ and positive constants $A$ and $B$
such that $f(x) \leq Ag(Bx)$ for every $x \in ]0,\varepsilon[$.

\begin{theorem}\label{thm: oguni}
Let $G$ be a Cayley graph of a finitely generated amenable group
and $(X_{n})_{n}$ the simple random walk on $G$, started at $o$. Let
$\mathbb{P}_{o}(X_{n}=o)$ be the return probability of the
simple random walk after $n$ steps.
\begin{itemize}
\item[(i)] Assume that there is a constant $0<b<1$ such that for
every positive integer $n$ we have $\mathbb{P}_{o}(X_{2n}=o)
\preceq e^{-(2n)^{b}}$. Then the integrated density of the full
Laplace operator $N_{\per}$ satisfies
$$
N_{\per}(E) \preceq e^{-E^{-\frac{b}{1-b}}}.
$$
\item[(ii)] Assume that there is a constant $0<b<1$ such that
$e^{-(2n)^{b}} \preceq \mathbb{P}_{o}(X_{2n}=o)$. Then, for every
$r > \frac{b}{1-b}$ we have
$$
e^{-E^{-r}} \preceq N_{\per}(E).
$$
\end{itemize}
\end{theorem}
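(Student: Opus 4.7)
The starting point is the spectral identity
\[
\mathbb{P}_o(X_{2n}=o) \;=\; \int_{-1}^{1} t^{2n}\, dN_{\frac{1}{k}A}(t) \;=\; \int_0^{2k}\bigl(1-\tfrac{E}{k}\bigr)^{2n}\, dN_{\per}(E),
\]
which follows from the spectral theorem applied to the vector $\delta_o$ together with the change of variable $E = k(1-t)$ coming from $H = kI - A$. The whole proof is a Tauberian-style transfer: sharp asymptotic information on the integrals of the polynomials $(1-E/k)^{2n}$ against $dN_{\per}$ is converted into asymptotic information on $N_{\per}$ itself near the lower spectral edge $E=0$.

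For part (i), the plan is to restrict the spectral integral to a neighbourhood of the lower edge, yielding
\[
\mathbb{P}_o(X_{2n}=o) \;\ge\; \int_0^E \bigl(1-\tfrac{E'}{k}\bigr)^{2n}\, dN_{\per}(E') \;\ge\; \bigl(1-\tfrac{E}{k}\bigr)^{2n}\, N_{\per}(E),
\]
so the hypothesis gives $N_{\per}(E) \preceq (1-E/k)^{-2n}\, e^{-(2n)^b}$. Using $-2n\log(1-E/k)\sim 2nE/k$ as $E\to 0$, one minimises the exponent $-c(2n)^b + c'(2n)E$ over $n$; the optimum sits at $2n\sim E^{-1/(1-b)}$ and evaluates to $-c(1-b)E^{-b/(1-b)}$, giving exactly the claimed exponent.

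For part (ii), the plan is to split the integral into three pieces, near $E=0$, in the middle, and near $E=2k$:
\[
\mathbb{P}_o(X_{2n}=o) \;\le\; N_{\per}(k\eta) + (1-\eta)^{2n} + \bigl(N_{\per}(2k) - N_{\per}(2k-k\eta)\bigr).
\]
The middle contribution is controlled by $(1-\eta)^{2n}$, since $|1-E/k|\le 1-\eta$ there and the total IDS mass is one. The third term either vanishes---by Kesten's theorem the top of $\sigma(H)$ stays strictly below $2k$ on any non-bipartite amenable Cayley graph---or on bipartite graphs equals the first one by the symmetry of $\sigma(A/k)$ about $0$. In both cases one obtains $c\,e^{-C(2n)^b} \preceq 2N_{\per}(k\eta) + (1-\eta)^{2n}$, and the task reduces to choosing $n$ so that the first summand dominates.

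The main obstacle is the trade-off optimisation in (ii): since $n$ is forced to be an integer while $\eta$ varies continuously, one cannot exactly balance $2n\eta\sim(2n)^b$, i.e.~$2n\sim\eta^{-1/(1-b)}$, and a small amount of slack has to be admitted, which is precisely why the conclusion is stated as $r>b/(1-b)$ rather than equality. A subsidiary technical point is the bipartite/non-bipartite dichotomy at the top of the spectrum noted above; this can alternatively be bypassed uniformly by working with the lazy random walk, whose transition operator $(I+A/k)/2$ has spectrum in $[0,1]$ and eliminates the top-edge term at the cost of only inessential multiplicative constants that are absorbed by $\preceq$.
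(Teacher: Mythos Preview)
Your proposal is correct and follows the same Tauberian route as the paper's proof, which simply refers to Oguni's Theorem~4.4 and records the one adjustment needed: in part~(i) the paper chooses
\[
n_\lambda=\left\lfloor\left(\frac{Cb}{\ln\frac{1}{1-\lambda}}\right)^{1/(1-b)}\right\rfloor,
\]
which is exactly your optimiser $2n\sim E^{-1/(1-b)}$ after identifying $\lambda$ with $E/k$ and $\ln\frac{1}{1-\lambda}\sim\lambda$. Your three-piece splitting in~(ii), together with the bipartite/non-bipartite dichotomy (or, equivalently, the lazy-walk trick), is the standard way to carry out that part; the paper does not spell this out and defers to Oguni.

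One small correction: the reason the conclusion of~(ii) is stated only for $r>\frac{b}{1-b}$ is \emph{not} the integrality of $n$. Once $2n\sim M\eta^{-1/(1-b)}$ with a fixed large constant $M$, rounding to an even integer changes the exponent by a factor $1+O(\eta^{1/(1-b)})$, which is harmless. In fact, fixing any $M$ with $M>CM^{b}$ makes the middle term $(1-\eta)^{2n}\le e^{-M\eta^{-b/(1-b)}}$ negligible against the left-hand side $\asymp e^{-CM^{b}\eta^{-b/(1-b)}}$, and your argument then yields $N_{\mathrm{per}}(E)\ge c\,e^{-C'E^{-b/(1-b)}}$ with the exact exponent. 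The theorem simply records the weaker statement, which of course follows.
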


\begin{proof}
The proof of both parts is a minor modification of the proof of
the Theorem 4.4 (parts (ii) and (iii)) in \cite{Oguni-07}. Using
the notation in \cite{Oguni-07} we shall explain the adjustments
which are needed to obtain Theorem~\ref{thm: oguni} from the proof
of \cite[Thm.~4.4]{Oguni-07}. The results in \cite{Oguni-07} are
formulated in terms of a certain distribution function $F$. First
note that the value $F(\lambda)$, for any given positive
$\lambda$, is nothing but $\displaystyle 1- \lim_{s \nearrow
1-\lambda} N_{\frac{1}{k} A}(s)$, where $N_{\frac{1}{k} A}$ is the
IDS of the rescaled adjacency operator $\frac{1}{k}A$. Here $k$ is
the vertex degree in the graph. From the relation $\displaystyle
N_{\per}(\lambda)=1-\lim_{s \nearrow 1- \frac{1}{k} \lambda}
N_{\frac{1}{k} A}(s)$ it is clear that $N_{per}(\lambda)=F(
\lambda/k)$. Thus it is sufficient to prove the desired inequalities
for the function $F$. 

In the proof of the part (ii) we choose
\[
n_{\lambda}:=\left[\left[\Big(\frac{Cb}{\ln
(\frac{1}{1-\lambda})}\Big)^{1/(1-b)}\right]\right] \, 
\]
which replaces the choice 
\[
n_{\lambda}:=\left[\left[\left(\frac{1}{\lambda}\right)^{1/(1-b+\varepsilon)}
\right]\right]
\]
in \cite{Oguni-07}. This enables us to eliminate
the variable $\varepsilon$ from the calculations and to prove the
wanted upper bound for
$F(\lambda)$.

For the lower bounds notice that our assumptions are somewhat
different than those in the part (iii) of the Theorem 4.4 in
\cite{Oguni-07}. Namely we assume uniform lower bounds for the
return probabilities. Following the steps of the cited proof, one can
prove the same inequalities for all positive $\lambda$ small
enough (i.e.~we do not need to define the sets $\Lambda_{C}$).
This is exactly what we wanted.
\end{proof}

\begin{proof}[Proof of Theorem~\ref{thm: lamplighter
deterministic}] Since the return probabilities of the simple
random on any Cayley graph of the Lamplighter group
$\mathbb{Z}_{m} \wr \mathbb{Z}$ satisfy the conditions from both
parts of the preceding theorem with $b=1/3$ (see Theorem 15.15 in
\cite{Woess-00}), the proof is straightforward from Theorem
\ref{thm: oguni}.
\end{proof}

\section{Related models}
\label{s-related}
In this last section we shall present several generalisations of the theorems 
in Section~\ref{s-results}. The first one concerns the case where the 
adjacency operator on $G$ is replaced by a general symmetric transition operator $P$.
It corresponds to a Markov chains whose state space is the vertex set of a Cayley graph.
The percolation process on $G$ leads then to a collection $H^P_\omega, \omega \in \Omega$
of random operators for which we characterise the low energy asymptotics.
We consider also a regularised version $H^R_\omega, \omega \in \Omega$
of the transition operator restricted to the percolation subgraph.
In the case of the Laplacian this regularisation corresponds to Neumann boundary conditions.

The second generalisation concerns combinatorial Laplacians on random  sub-graphs 
generated by a long range bond percolation process on a quasi-transitive graph.

Finally we discuss the spectral asymptotics of
combinatorial Laplacians on an abstract ensemble of percolation graphs.

\subsection{General symmetric transition operators}
We consider now operators which correspond to general transition operators on $\Gamma$,
respectively to Cayley graphs with weights on the edges.

Let $\Gamma$ be a discrete, finitely generated group and 
$\cP$ a matrix indexed by $\Gamma \times \Gamma$ whose coefficients 
$\cP(x,y)$ are non-negative and 
satisfy:
\begin{enumerate}[(a)]
\item the set $S:=\left\{x \in \Gamma; \cP(\iota,x)\neq 0\right\}$ 
is a finite symmetric set of generators of the group $\Gamma$, 
which does not contain $\iota$,
\item for all pairs of group elements $(x,y)$ we have $\cP(y,x)=\cP(x,y)$,
\item for all pairs of group elements $(x,y)$ we have $\cP(x,y)=\cP(\iota, x^{-1}y)$.
\end{enumerate}
\begin{remark}
Note that by (a) and (c) there exists a constant $M \in \RR$ 
such that $\sum_{y\in \Gamma}\cP(x,y)=M$  for all $x \in \Gamma$.
Since the spectral properties of the matrix $\cP(x,y), x,y\in \Gamma$ can be recovered from those 
of $\frac{1}{M}\cP(x,y), x,y\in \Gamma$ we assume in the sequel 
without loss of generality that $\sum_{y\in \Gamma}\cP(x,y)=1$.
Thus the linear map $P \colon \ell^{2}(\Gamma) \to \ell^{2}(\Gamma)$ defined by
\[
(P(x,y)\varphi)(x) = \sum_{y}\cP(x,y)\varphi(y)
\]
is a  \textit{transition operator} whose matrix of transition probabilities is 
given by the coefficients $\cP(x,y), x,y\in \Gamma$.
The Laplace operator corresponding to $P$ is defined as $H^{\scriptscriptstyle P}:=\Id-P$.

The symmetry of the transition probabilities (b) implies the reversibility of the 
Markov chain associated to $P$. More explicitly, there exists a positive function 
$m \colon \Gamma \to \mathbb{R}$ such that $m(x)\cP(x,y)=m(y)\cP(y,x)$ for all pairs of elements $(x,y)$.
\end{remark}

We construct a graph $G_P$ whose vertex set equals $\Gamma$ and such that two vertices $x$ and $y$ are connected 
if and only if $\cP(x,y) \neq 0$. Notice that this graph is actually a Cayley graph of the group $\Gamma$ 
with respect to the generator set $S$. In the particular case in which all probabilities $\cP(\iota,x)$, 
$x \in S$ are the same, the operator $H^{\scriptscriptstyle P}$ is actually equal to $\frac{1}{|S|}H^{\scriptscriptstyle A}(G_P)$.

If $G'=(V',E')$ is a subgraph of $G_P$ we shall denote by $H^{\scriptscriptstyle P}(G')$ the restriction of the operator $H^{\scriptscriptstyle P}$ 
to $\ell^{2}(V')$. 
In other words $H^{\scriptscriptstyle P}(G')$ is defined on $\ell^{2}(V')$ and satisfies $\left<\delta_{x}, H^{\scriptscriptstyle P}(G')\delta_{y}\right> = \left<\delta_{x}, H^{\scriptscriptstyle P}\delta_{y}\right>$ for every two vertices $x$ and $y$ in $V'$.

Now we can run the nearest neighbour independent bond percolation process on the graph $G_P$. 
Each percolation subgraph $G_{\omega}$ will induce a perturbation of the operator $H^{\scriptscriptstyle P}$. Namely we define the operators $H^{\scriptscriptstyle P}_{\omega}:=H^{\scriptscriptstyle P}(G_{\omega})$. In this way we obtain a family of bounded selfadjoint operators, indexed by the set of all possible percolation configurations $\Omega$.

Now we introduce the analog of a percolation Laplacian with Neumann boundary conditions
which corresponds to a general transition operator $P$. 
For a subgraph $G'=(V',E')$ of $G_P$ we define the \textit{regularised Laplacian} as 
\[
 (H^{\scriptscriptstyle R}(G')\varphi)(x) = \sum_{y \in G'; y \sim_{G'}x} \cP(x,y)(\varphi(x)-\varphi(y)), \text{ for every } x \in \ell^{2}(V').
\]
 Now the \textit{regularised percolation Laplacian} is defined as $H^{\scriptscriptstyle R}_{\omega} := H^{\scriptscriptstyle R}(G_{\omega})$.

The quadratic forms of the two operators $H^{\scriptscriptstyle P}(G')$ and $H^{\scriptscriptstyle R}(G')$  are given by
\begin{equation}\label{eq: markov form}
\left<\varphi,H^{\scriptscriptstyle P}(G') \varphi\right> = \sum_{[x,y] \in E'}\cP(x,y)|\varphi(x)-\varphi(y)|^{2} + \sum_{x \in V', [x,y] \notin E'}\cP(x,y)|\varphi(x)|^{2},
\end{equation}
and
\begin{equation}\label{eq: regularized form}
\left<\varphi,H^{\scriptscriptstyle R}(G') \varphi\right> = \sum_{[x,y] \in E'}\cP(x,y)|\varphi(x)-\varphi(y)|^{2}.
\end{equation}

The integrated density of states is again 
defined as 
$$
N^{\scriptscriptstyle \#}(E):=\mathbb{E}\left\{\la \delta_{x},\chi_{]-\infty,E]}(H^{\#}_{\omega})\delta_{x}\ra \right\},
$$ 
for an arbitrary $x \in \Gamma$ and $\# \in \left\{P,R\right\}$. 
Clearly the IDS does not depend on the choice of the vertex $x$ in the definition.
The IDS  of the deterministic operator on the full graph 
is defined as $N_{per}^{P}(E):=\la \delta_{x},\chi_{]-\infty,E]}(H^{\scriptscriptstyle P})\delta_{x}\ra$.

Now we state the  results of this section. They concern the low energy asymptotics of $N^{\scriptscriptstyle P}_\per$, $N^{\scriptscriptstyle P}$, and $N^{\scriptscriptstyle R}$.
Recall that $B(n)$ denotes the ball of radius $n$ around $\iota$ in the graph $G_P$ 
and  $V(n)$ the number of vertices in $B(n)$. 

As in the special case of the Laplacian the first result can be inferred from 
\cite{Woess-00} or \cite{Lueck-02}.
\begin{theorem}\label{thm: main markov deterministic}
Let $\Gamma$ be a finitely generated amenable group. If $\Gamma$ has polynomial growth of order $d$ then 
$$
\lim_{E\searrow 0} \frac{\ln N_{per}^{P}(E)}{\ln E} = \frac{d}{2}.
$$
If $\Gamma$ has superpolynomial growth then
$$
\lim_{E\searrow 0} \frac{\ln N_{per}^{P}(E)}{\ln E} = \infty.
$$
\end{theorem}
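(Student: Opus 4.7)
The approach is to follow the strategy sketched at the end of Section~5 for the proof of Theorem~\ref{thm: full graph}: relate the return probabilities of the Markov chain governed by $P$ to the spectral measure of $H^{\scriptscriptstyle P}=\Id-P$ at $\delta_\iota$, and then apply a Tauberian argument using Varopoulos-type estimates on the return probabilities of reversible random walks on the Cayley graph $G_P$.

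The first step is to invoke the spectral theorem. Since $P$ is symmetric with $\sum_{y}\cP(x,y)=1$, its spectrum lies in $[-1,1]$ and $H^{\scriptscriptstyle P}$ has spectrum in $[0,2]$. Let $\mu$ denote the spectral measure of $H^{\scriptscriptstyle P}$ associated to $\delta_\iota$, so that $N_\per^P(E)=\mu([0,E])$. After the substitution $s=1-t$,
\[
\mathbb{P}_\iota(X_{2n}=\iota)=\la\delta_\iota,P^{2n}\delta_\iota\ra=\int_0^2(1-s)^{2n}\,d\mu(s).
\]
The second ingredient is Varopoulos's theorem for reversible random walks on finitely generated groups, formulated e.g.~in \cite{Varopoulos-87} and Corollary~14.5 together with Theorems~14.12 and~14.19 of \cite{Woess-00}: for $\Gamma$ of polynomial growth of order $d$ one has $c_1 n^{-d/2}\le \mathbb{P}_\iota(X_{2n}=\iota)\le c_2 n^{-d/2}$, while for superpolynomial growth the return probabilities decay faster than any inverse polynomial.

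For the upper bound on $N_\per^P$, I would use the trivial estimate $(1-E)^{2n}N_\per^P(E)\le\int_0^E(1-s)^{2n}\,d\mu(s)\le \mathbb{P}_\iota(X_{2n}=\iota)$. Choosing $n=\lceil 1/E\rceil$ so that $(1-E)^{2n}$ stays bounded below by a constant, one obtains $N_\per^P(E)\le C\,E^{d/2}$ in the polynomial case and, by plugging in the superpolynomial decay, the divergence $\lim_{E\searrow 0}\ln N_\per^P(E)/\ln E=\infty$ in the other case. For the lower bound in the polynomial case, one applies a Karamata-type Tauberian theorem to the Laplace-type transform $n\mapsto \int_0^\bullet(1-s)^{2n}\,d\mu(s)$: since $\mu$ is a positive measure and the lower return probability bound $c_1 n^{-d/2}$ holds, the argument yields $N_\per^P(E)\ge c\,E^{d/2}$ for small $E$.

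The main obstacle is the lower bound, and more specifically the fact that the integrand $(1-s)^{2n}$ on $[0,2]$ is not monotone: it also peaks near the upper spectral edge $s=2$, which in the bipartite situation (e.g.~the simple random walk on $\mathbb{Z}^d$) would contribute to the return probability with the same order as the mass near $s=0$. To isolate the contribution of the lower edge one passes to the lazy Markov chain with transition operator $P':=\tfrac{1}{2}(\Id+P)$, so that $H^{\scriptscriptstyle P'}=\tfrac12 H^{\scriptscriptstyle P}$ has spectrum in $[0,1]$ and its spectral measure at $\delta_\iota$ is supported on $[0,1]$ where $(1-s)^{2n}$ is monotonically decreasing; the return probabilities of $P'$ inherit the Varopoulos asymptotics of $P$ by a standard comparison. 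With this modification the Tauberian theorem applies cleanly, and the asymptotics transfer back to $N_\per^P$ via $N_\per^P(E)=N_\per^{P'}(E/2)$.
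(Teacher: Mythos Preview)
Your proposal is correct and follows essentially the same route as the paper: relate return probabilities of the Markov chain to moments of the spectral measure of $H^{\scriptscriptstyle P}$, invoke the Varopoulos-type estimates from \cite{Woess-00} for the return probabilities, and convert these via a Tauberian argument into the asymptotics of $N_\per^{P}$. The paper's own proof is in fact terser than yours --- it simply points back to the sketch for Theorem~\ref{thm: full graph} and to the same references in \cite{Woess-00}, without spelling out the Tauberian step or the bipartite/lazy-chain issue you raise; that technicality is implicitly delegated to \cite[Lemma~2.46]{Lueck-02}.
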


\begin{proof}
As explained in Section~\ref{s-polynomial-growth} one can relate the return probabilities of the simple random 
walk and the moments of the measure induced by the IDS. 
For general transition operators $P$ as above
the relation between the return probabilities of the Markov chain $X$ and the moments of the measure induced by the IDS is the same. Thus, just like before, we only have to prove that the return probabilities $\mathbb{P}(X_{n}=\iota)$ behave like $n^{-d/2}$. This actually follows from the same results in \cite{Woess-00} as in Section~\ref{s-polynomial-growth}.
\end{proof}

\begin{theorem}\label{thm: main markov}
Let $\Gamma$ be an amenable, finitely generated group.

Assume that $\Gamma$ has a polynomial growth, i.e.~there exists a positive integer $d$ such that $V(n) \sim n^{d}$. Then for every $p < p_{c}$ there are positive constants
$\alpha_{P}^{+}(p)$ and $\alpha_{P}^{-}(p)$ such that for all positive $E$ small enough
\begin{equation}\label{eq: main markov poly}
e^{-\alpha_{P}^{-}(p)E^{-d/2}} \leq N^{\scriptscriptstyle P}(E) \leq e^{-\alpha_{P}^{+}(p) E^{-d/2}}.
\end{equation}
Assume that $\Gamma$ has superpolynomial growth. Then
\begin{equation}\label{eq: main markov super}
\lim_{E \searrow 0} \frac{\ln |\ln \, N^{\scriptscriptstyle P}(E)|}{|\ln E|} = \infty.
\end{equation}
\end{theorem}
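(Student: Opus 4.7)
The argument mirrors the proof of Theorem~\ref{thm: main Dirichlet} step by step: plug appropriate spectral gap estimates for $H^{\scriptscriptstyle P}$ on finite subgraphs into the abstract Propositions~\ref{prp: abstract upper bounds for IDS} and~\ref{prp: abstract lower bounds for IDS}. The only genuinely new ingredient is the replacement of the adjacency-Laplacian Faber--Krahn estimate by its weighted counterpart for $H^{\scriptscriptstyle P}$.

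Set $\mu := \min_{s \in S}\cP(\iota, s) > 0$, which is positive because $S$ is finite. By translation invariance $\cP(x,y) \geq \mu$ for every edge of the $|S|$-regular Cayley graph $G_{\scriptscriptstyle P}$. Since $H^{\scriptscriptstyle P}(G')$ depends only on the vertex set $V'$ (via the defining matrix identity $\langle \delta_x, H^{\scriptscriptstyle P}(G')\delta_y\rangle = \langle \delta_x, H^{\scriptscriptstyle P}\delta_y\rangle$), inspecting the quadratic form \eqref{eq: markov form} on the induced subgraph term by term gives the operator comparison
\begin{equation*}
H^{\scriptscriptstyle P}(G') \;\geq\; \mu\, H^{\scriptscriptstyle A}(G')
\end{equation*}
on $\ell^{2}(V')$. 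The variational principle together with Proposition~\ref{prop: faber-krahn} then yields
\begin{equation*}
\lambda^{\scriptscriptstyle P}(G') \;\geq\; \mu\,\lambda^{\scriptscriptstyle A}(G') \;\geq\; \frac{\mu}{128\,|S|^{2}\,\phi(2|G'|)^{2}}.
\end{equation*}
Feeding this weighted Faber--Krahn bound into Proposition~\ref{prp: abstract upper bounds for IDS} produces, for every $0 < p < p_c$, a positive constant $a_p$ such that
\begin{equation*}
N^{\scriptscriptstyle P}(E) \;\leq\; \exp\!\Big(-\tfrac{a_p}{2}\, V\!\big(C\,E^{-1/2}-1\big)\Big),\qquad C = C(\mu, |S|),
\end{equation*}
for all sufficiently small $E > 0$. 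In the polynomial-growth case $V(n) \sim n^d$ this is the upper bound in \eqref{eq: main markov poly}; in the superpolynomial case the ratio $\ln V(CE^{-1/2}-1)/|\ln E|$ diverges as $E \searrow 0$, yielding \eqref{eq: main markov super} exactly as in the proof of Theorem~\ref{thm: main Dirichlet}.

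For the lower bound in \eqref{eq: main markov poly} the plan is to apply Proposition~\ref{prp: abstract lower bounds for IDS} with $G_n' := B(n)$. The upper eigenvalue estimate $\lambda^{\scriptscriptstyle P}(B(n)) \leq \beta_{\scriptscriptstyle P}^{+}/n^{2}$ is obtained by evaluating the Rayleigh quotient on the radial cone function used in the proof of Proposition~\ref{prop: upper eigen bounds}: because $\cP$ is supported on $G_{\scriptscriptstyle P}$-nearest neighbours, $|\varphi(x) - \varphi(y)| \leq 1$ on such pairs, and $\varphi$ vanishes on the sphere $\{x : d(\iota, x) = n\}$, the form \eqref{eq: markov form} is bounded above by $V(n)$ while $\|\varphi\|^{2} \geq \lceil n/2 \rceil^{2}\, V(\lfloor n/2 \rfloor)$, so polynomial growth forces $\lambda^{\scriptscriptstyle P}(B(n)) \leq \beta_{\scriptscriptstyle P}^{+}/n^{2}$. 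With the choice $c_n := \beta_{\scriptscriptstyle P}^{+}/n^{2}$ one gets $n(E) \asymp E^{-1/2}$ and $|G_{n(E)}'| \asymp E^{-d/2}$, and Proposition~\ref{prp: abstract lower bounds for IDS} delivers the sought bound $N^{\scriptscriptstyle P}(E) \geq e^{-\alpha_{\scriptscriptstyle P}^{-}(p)\,E^{-d/2}}$.

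The main potential obstacle is to state the comparison $H^{\scriptscriptstyle P}(G') \geq \mu\, H^{\scriptscriptstyle A}(G')$ in a way that applies uniformly to site and bond percolation subgraphs; since by construction $H^{\scriptscriptstyle P}(G')$ is insensitive to the choice of edge set and the bound $\cP(x,y) \geq \mu$ holds for every pair of $G_{\scriptscriptstyle P}$-neighbours, no additional care is required and the estimate follows from \eqref{eq: markov form} directly. With the weighted Faber--Krahn bound and the ball test function in hand, the rest of the proof is a verbatim transcription of the Laplacian argument from Section~\ref{s-polynomial-growth}.
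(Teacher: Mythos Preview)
Your proof is correct and follows essentially the same route as the paper. The only difference is cosmetic: the paper states the two-sided sandwich
\[
\Big(\min_{[x,y]\in G_P}\cP(x,y)\Big)H^{\scriptscriptstyle A}(G') \;\le\; H^{\scriptscriptstyle P}(G') \;\le\; \Big(\max_{[x,y]\in G_P}\cP(x,y)\Big)H^{\scriptscriptstyle A}(G')
\]
once and for all, and then imports both Proposition~\ref{prop: faber-krahn} and Proposition~\ref{prop: upper eigen bounds} by multiplying with the appropriate constant; you use only the lower half of the sandwich for Faber--Krahn and redo the cone test-function computation by hand for the upper eigenvalue bound, exploiting $\sum_y\cP(x,y)=1$ instead of $\cP(x,y)\le\max\cP$. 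Both arguments plug into Propositions~\ref{prp: abstract upper bounds for IDS} and~\ref{prp: abstract lower bounds for IDS} in exactly the same way, so there is no substantive difference.
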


\begin{theorem}\label{thm: main regularized}
Let $\Gamma$ be a finitely generated group. Then for every $p<p_{c}$ there exist positive constants $\alpha_{R}^{+}(p)$ and $\alpha_{R}^{-}(p)$ such that 
\begin{equation}\label{eq: main regularized}
e^{-\alpha_{R}^{-}(p)E^{-1/2}} \leq N^{\scriptscriptstyle R}(E) - N^{\scriptscriptstyle R}(0) \leq e^{-\alpha_{R}^{+}(p) E^{-1/2}}.
\end{equation}
\end{theorem}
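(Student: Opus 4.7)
The plan is to mirror the proof of Theorem~\ref{thm: main Neumann} and transport it to the weighted setting defined by $\cP$. The decisive observation is that the quadratic form (\ref{eq: regularized form}) for $H^{\scriptscriptstyle R}(G')$ has exactly the combinatorial structure of the quadratic form of the combinatorial Laplacian $H^{\scriptscriptstyle N}(G')$, with each edge difference merely re-weighted by the positive transition probability $\cP(x,y)$. Since $S$ is finite and $\cP(\iota,s)>0$ for every $s\in S$, the invariance $\cP(x,y)=\cP(\iota,x^{-1}y)$ furnishes constants $0<p_{\min}\le p_{\max}$ with $p_{\min}\le \cP(x,y)\le p_{\max}$ on every edge of $G_P$. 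Comparing (\ref{eq: regularized form}) with the first identity in (\ref{e-quadratic-forms}) yields
$$p_{\min}\,\bigl\langle\varphi,H^{\scriptscriptstyle N}(G')\varphi\bigr\rangle \;\le\; \bigl\langle\varphi,H^{\scriptscriptstyle R}(G')\varphi\bigr\rangle \;\le\; p_{\max}\,\bigl\langle\varphi,H^{\scriptscriptstyle N}(G')\varphi\bigr\rangle$$
for every finite subgraph $G'\subseteq G_P$ and every $\varphi\in\ell^2(V')$. In particular, the kernels of $H^{\scriptscriptstyle R}(G')$ and $H^{\scriptscriptstyle N}(G')$ coincide (both consist of functions constant on the connected components of $G'$), so that the lowest nonzero eigenvalues satisfy $p_{\min}\lambda^{\scriptscriptstyle N}(G')\le \lambda^{\scriptscriptstyle R}(G')\le p_{\max}\lambda^{\scriptscriptstyle N}(G')$.

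For the upper bound in (\ref{eq: main regularized}) I combine this comparison with the Cheeger-type estimate of Proposition~\ref{prop: cheeger}, which applies since $G_P$ is a Cayley graph of bounded vertex degree $|S|$, to deduce $\lambda^{\scriptscriptstyle R}(G')\ge 2p_{\min}/|G'|^{2}$ for every finite connected subgraph $G'$ of $G_P$. Proposition~\ref{prp: abstract upper bounds for IDS}, applied with $f(s):=2p_{\min}/s^{2}$ (so that $f^{-1}(E)=\sqrt{2p_{\min}/E}$), then yields the desired upper estimate of the form $e^{-\alpha_R^{+}(p)E^{-1/2}}$.

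For the lower bound I invoke Proposition~\ref{prp: abstract lower bounds for IDS} with the family of linear subgraphs $L_n\subset G_P$ used in the proof of Theorem~\ref{thm: main Neumann}. The linearly growing test function from the proof of Proposition~\ref{prop: upper neum eigen bounds}, which is orthogonal to the one-dimensional kernel, fed into the mini-max principle for $H^{\scriptscriptstyle R}(L_n)$ and combined with the comparison above, gives $\lambda^{\scriptscriptstyle R}(L_n)\le p_{\max}\lambda^{\scriptscriptstyle N}(L_n)\le 12p_{\max}/n^{2}$. Setting $G'_n:=L_n$ and $c_n:=12p_{\max}/n^{2}$, one has $n(E)\le C E^{-1/2}$ and $|L_{n(E)}|=n(E)+1$, so that Proposition~\ref{prp: abstract lower bounds for IDS} delivers
$$N^{\scriptscriptstyle R}(E)-N^{\scriptscriptstyle R}(0)\;\ge\; e^{-b_p|L_{n(E)}|}\;\ge\; e^{-\alpha_R^{-}(p)E^{-1/2}}.$$

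The chief technical point worth flagging is the identification of the kernel of $H^{\scriptscriptstyle R}(G')$, since everything downstream relies on the spectral gap $\lambda^{\scriptscriptstyle R}(G')$ being the correct quantity to estimate; but this is immediate from (\ref{eq: regularized form}), the form vanishing precisely on locally constant functions. Once the two-sided comparison with $H^{\scriptscriptstyle N}$ is in place, the proof is a transparent transport of the Cheeger inequality plus linear-subgraph strategy of Theorem~\ref{thm: main Neumann}, the only additional bookkeeping being the uniform constants $p_{\min}$ and $p_{\max}$ provided by the finiteness of $S$ together with property (c).
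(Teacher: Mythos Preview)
Your proof is correct and follows essentially the same route as the paper: you sandwich $H^{\scriptscriptstyle R}(G')$ between $p_{\min}H^{\scriptscriptstyle N}(G')$ and $p_{\max}H^{\scriptscriptstyle N}(G')$ using the quadratic-form comparison, then feed the resulting eigenvalue bounds (Cheeger for the lower bound on $\lambda^{\scriptscriptstyle R}$, linear subgraphs for the upper bound) into Propositions~\ref{prp: abstract upper bounds for IDS} and~\ref{prp: abstract lower bounds for IDS}, exactly as in the proof of Theorem~\ref{thm: main Neumann}. The only implicit step you gloss over---that Propositions~\ref{prp: abstract upper bounds for IDS} and~\ref{prp: abstract lower bounds for IDS} extend verbatim to the boundary type $\#=R$---is also treated as trivial in the paper, and it is, since those arguments use only that $H^{\scriptscriptstyle \#}_\omega$ is block-diagonal over clusters.
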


\begin{proof}[Proofs of theorems~\ref{thm: main markov} and~\ref{thm: main regularized}]
For both types of the Laplacian propositions~\ref{prp: abstract upper bounds for IDS} and~\ref{prp: abstract lower bounds for IDS} are trivially extended. Thus the problem of finding upper and lower bounds for the IDS is again reduced to the problem of finding lower and upper bounds for the lowest non-zero eigenvalue on finite subgraphs. On the other hand from (\ref{eq: markov form}) and (\ref{eq: regularized form}) one directly obtains 
$$
\Big(\min_{[x,y] \in G_P} \cP(x,y)\Big) H^{\scriptscriptstyle A}(G') \leq H^{\scriptscriptstyle P}(G') \leq \Big(\max_{[x,y] \in G_P} \cP(x,y)\Big) H^{\scriptscriptstyle A}(G') 
$$
and
$$
\Big(\min_{[x,y] \in G_P} \cP(x,y)\Big) H^{N}(G') \leq H^{\scriptscriptstyle R}(G') \leq \Big(\max_{[x,y] \in G_P} \cP(x,y)\Big) H^{N}(G').
$$ 
Since $\min_{[x,y] \in G_P} \cP(x,y)=\min_{y \in S} \cP(\iota,y)$ this term is strictly positive.
By the invariance under the group $\Gamma$ the term $\max_{[x,y] \in G_P} \cP(x,y)$ is finite.

Now the bounds for eigenvalues from propositions~\ref{prop: faber-krahn}, \ref{prop: upper eigen bounds}, \ref{thm: main Dirichlet} and~\ref{thm: main Neumann} (with additional positive multiplication factors $\displaystyle \min_{[x,y] \in G_P} \cP(x,y)$ and $\displaystyle \max_{[x,y] \in G_P} \cP(x,y)$) transfer directly to this generalised setting. Using these bounds, 
the proofs of Theorems~\ref{thm: main markov} and~\ref{thm: main regularized} are completed in the same way as 
the proofs of Theorems~\ref{thm: main Dirichlet} and~\ref{thm: main Neumann} in Section~\ref{s-polynomial-growth}. 
\end{proof}



\subsection{Laplacians on long range percolation graphs}
The long range percolation model is a generalisation of the nearest neighbour model. 
In this model one allows any pair of vertices to be directly connected, i.e. adjacent, in the percolation graph. 
However, to control the size of the percolation clusters, the probabilities that two vertices are directly connected 
must decay as the distance between them converges to infinity. 
More precisely we take an arbitrary quasi-transitive graph $G$ with finite vertex degrees and a fundamental domain $\mathcal{F}$. 
We construct the graph $\overline{G}$ by connecting each pair of vertices in $G$. 
For the metric on the set of vertices of $\overline{G}$ we will take the graph metric $d$ in $G$. 
In particular, two vertices $x,y$ of $\overline{G}$ may be adjacent (directly connected) although $d(x,y) >1$. 

For each pair of vertices $x$ and $y$ we take a positive real number $J_{[x,y]}$ such that
\begin{itemize}
 \item $J_{[\gamma x, \gamma y]} = J_{[x,y]}$, for all vertices $x$ and $y$ and all graph automorphisms $\gamma$,
 \item $J_{x}:=\displaystyle \sum_{y \in G} J_{[x,y]} < \infty$ for all vertices $x$ (we define $\displaystyle J:=\max_{x \in \mathcal{F}}J_{x}$).
\end{itemize}
Now for each edge $e$ in $\overline{G}$, one declares $e$ to be open with probability $1-e^{-\beta J_{e}}$, for some positive parameter $\beta$, independently of all other edges in $\overline{G}$. 
The percolation subgraph $G_{\omega}=(V_{\omega},E_{\omega})$ is defined as the subgraph spanned by the set of open edges. 
$G_{\omega}$ contains arbitrary long edges almost surely.
Notice that the probability that certain edge is open is increasing in $\beta$. 
Thus, the subcritical phase, in which all clusters are almost surely finite corresponds to small values of the parameter $\beta$ and the supercritical phase in which there exists an infinite cluster corresponds to large values of the parameter $\beta$. Just like in the case of the  nearest neighbour percolation model these two phases are separated by a single value of the parameter $\beta$. This value will be denoted by $\beta_{c}$. The cluster containing an arbitrary vertex $x$ will again be denoted by $C_{x}$. 
In \cite{AntunovicV-a} it is proven that the probabilities $\mathbb{P}(|C_{x}| \geq n)$ 
decay exponentially in the subcritical phase, i.e.~$\beta < \beta_c$,
of the long range model, (see \cite{AizenmanB-87} for the case $G = \ZZ^d$).

The percolation Laplacian is defined as the combinatorial Laplacian on the percolation subgraph. 
More precisely we define the operator $H^{\scriptscriptstyle N,L}$ on $\ell^{2}(V_{\omega})$ for all $\varphi$ with finite support by
\[
(H^{\scriptscriptstyle N,L}_{\omega}\varphi)(x) = \sum_{y \in G_{\omega};y \sim_{G_{\omega}} x} (\varphi(x)-\varphi(y)).
\]
Since we have no upper bound on the vertex degrees any more, this operator is not bounded almost surely. 
It is still self-adjoint on its maximal domain 
$\mathcal{D}(H^{\scriptscriptstyle N,L}_{\omega}):=\left\{\varphi \in \ell^{2}(V_{\omega}); H^{\scriptscriptstyle N,L}_{\omega}\varphi \in \ell^{2}(V_{\omega})\right\}$.

The integrated density of states is again defined as 
$$
N^{\scriptscriptstyle N,L}(E) := \mathbb{E}\left\{\Tr[\chi_{\mathcal{F}}\chi_{]-\infty,E]}(H^{\scriptscriptstyle N,L}_{\omega})]\right\}.
$$ 
It exhibits the same asymptotics as the combinatorial Laplacian in the nearest neighbour percolation model.

\begin{theorem}\label{thm: main long range}
Let $G$ be a quasi-transitive graph with finite vertex degrees. For every subcritical parameter $\beta$ there exist positive constants $\alpha_{N,L}^{-}(\beta)$ and $\alpha_{N,L}^{+}(\beta)$ such that for all positive $E$ small enough
$$
e^{-\alpha^{-}_{N,L}(\beta)E^{-1/2}} \leq N^{\scriptscriptstyle N,L}(E)-N^{\scriptscriptstyle N,L}(0) \leq e^{-\alpha^{+}_{N,L}(\beta)E^{-1/2}}.
$$
\end{theorem}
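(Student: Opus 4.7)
The plan is to mirror the two-step proof of Theorem~\ref{thm: main Neumann}: an abstract upper/lower bound for the IDS in terms of extremal eigenvalues of the combinatorial Laplacian on finite subgraphs (Propositions~\ref{prp: abstract upper bounds for IDS} and~\ref{prp: abstract lower bounds for IDS}), combined with the Cheeger-type lower bound (Proposition~\ref{prop: cheeger}) and the linear-subgraph upper bound (Proposition~\ref{prop: upper neum eigen bounds}) on those eigenvalues. Both eigenvalue estimates apply to any finite connected graph and require no hypothesis on the ambient graph, so they transfer unchanged to the long-range setting. The actual work is in re-examining the abstract IDS arguments, since the long-range graph $\overline G$ has unbounded vertex degree.

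For the upper bound, I would note that in the subcritical regime $\beta<\beta_c$ the operator $H^{\scriptscriptstyle N,L}_\omega$ decomposes as an orthogonal direct sum over the (almost surely finite) clusters of $G_\omega$, so the spectral projector splits cluster-wise and the trace formula reads
\begin{equation*}
N^{\scriptscriptstyle N,L}(E)-N^{\scriptscriptstyle N,L}(0) = \frac{1}{|\mathcal F|}\sum_{x\in\mathcal F}\EE\bigl(\la \delta_x,\chi_{]0,E]}(H^{\scriptscriptstyle N}(C_x(\omega)))\delta_x\ra\bigr).
\end{equation*}
Proposition~\ref{prop: cheeger} gives $\lambda^{\scriptscriptstyle N}(C_x(\omega))\geq 2/|C_x(\omega)|^2$, so the integrand vanishes on the event $\{|C_x(\omega)|<\sqrt{2/E}\}$. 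Combining this with the exponential decay of subcritical cluster sizes for long-range percolation on quasi-transitive graphs from~\cite{AntunovicV-a} yields the desired upper bound $e^{-\alpha^+_{N,L}(\beta)E^{-1/2}}$.

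For the lower bound I would take as candidate subgraphs the linear subgraphs $L_n$ of length $n$ inside $G$ itself, for which $\lambda^{\scriptscriptstyle N}(L_n)\leq 12/n^2$ by Proposition~\ref{prop: upper neum eigen bounds}; thus one sets $c_n=12/n^2$ and $n(E)\asymp E^{-1/2}$. The key quantity is $\PP(L_n\text{ is a cluster in }G_\omega)$, which I would estimate from below by the product of two factors: (i) all $n$ path edges of $L_n$ are open, of probability at least $(1-e^{-\beta J_{\min}})^n$, where $J_{\min}>0$ is the minimum of $J_e$ over the finitely many $\Gamma$-orbits of edges of $G$; and (ii) every pair of vertices meeting $L_n$ that is not a path edge is closed in $\overline G$, of probability $\exp(-\beta S)$ where $S\leq \sum_{x\in L_n}J_x\leq(n+1)J$. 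Multiplying gives $\PP(L_n\text{ is a cluster})\geq e^{-b_\beta n}$, and the equivariance bookkeeping from the proof of Proposition~\ref{prp: abstract lower bounds for IDS} then delivers the lower bound $e^{-\alpha^-_{N,L}(\beta)E^{-1/2}}$.

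The main obstacle will be precisely factor~(ii). In the short-range setting each vertex has only finitely many neighbours, so isolating $L_n$ requires closing boundedly many edges per vertex. In the long-range setting the analogous event involves closing infinitely many edges, and it is only the summability hypothesis $J_x<\infty$, uniformly bounded by $J$ via quasi-transitivity, that converts the infinite product $\prod_y e^{-\beta J_{[x,y]}}$ into a single workable exponential factor $e^{-\beta J_x}\geq e^{-\beta J}$. Once that observation is in hand the rest is essentially a routine adaptation of the short-range proof.
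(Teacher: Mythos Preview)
Your proposal is correct and follows essentially the same route as the paper. The paper likewise reduces to analogues of Propositions~\ref{prp: abstract upper bounds for IDS}--\ref{prp: abstract lower bounds for IDS} combined with Propositions~\ref{prop: cheeger} and~\ref{prop: upper neum eigen bounds}, and isolates exactly the point you flag: the only new difficulty is bounding $\PP(L_n\text{ is a cluster in }G_\omega)$ from below when each vertex of $L_n$ has infinitely many potential edges to close. The paper packages this as a separate lemma (Lemma~\ref{lemma: probability long range}) valid for any finite connected $G'\subset\overline G$ whose edges all have $d$-length at most some fixed $k$, with the same two ingredients you use --- the factor $e^{-\beta J_x}\ge e^{-\beta J}$ from summability for the ``all other edges closed'' part, and a uniform lower bound on $\prod_{e\in I_x}(e^{\beta J_e}-1)$ over the finitely many orbits of short edges for the ``path edges open'' part --- and then applies it with $k=1$ to the linear subgraphs $L_n$. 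Your direct computation with $J_{\min}$ is the specialisation of that lemma to $k=1$, so the two arguments coincide.
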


\begin{proof}
Similarly as in sections~\ref{s-abstract-bounds} and~\ref{s-eigenvalue-bounds} we are able to prove the following statements:
\begin{itemize}
 \item[1)] Let $f \colon \mathbb{R}^{+} \to \mathbb{R}^{+}$ be a continuous, strictly decreasing function, such that $\lim_{s \to \infty}f(s)=0$ and $\lambda^{N}(G') \geq f(|G'|)$ holds for every finite subgraph $G'$ of $\overline{G}$. Then, for each $\beta < \beta_{c}$, the inequality $N^{\scriptscriptstyle N,L}(E)-N^{\scriptscriptstyle N,L}(0) \leq e^{-a^{L}_{\beta}f^{-1}(E)}$ holds for some positive constant $a^{L}_{\beta}$ and all positive $E$ small enough.
 \item[2)] Assume that there is a sequence of connected subgraphs $(G'_{n})_{n}$ in $\overline{G}$, with the property $\lim_{n \to \infty}|G'_{n}|=\infty$ and a sequence  $(c_{n})_{n}$ in $\mathbb{R}^{+}$ which converges to $0$ such that $\lambda^{N}(G'_{n}) \leq c_{n}$, for all $n$. Furthermore assume there is a positive integer $k$ such that for any $n \in \mathbb{N}$ and any two directly connected vertices $x$ and $y$ in $G'_{n}$ we have $d(x,y) \leq k$. Again define $n(E):=\min\left\{n;c_{n}\leq E\right\}$. Then for every $\beta \in \mathbb{R}^{+}$ there is a positive constant $b^{L}_{\beta}$ such that for all positive $E$ small enough we have
$$
N^{\scriptscriptstyle N,L}(E)-N^{\scriptscriptstyle N,L}(0) \geq \frac{1}{|\mathcal{F}|}
\mathbb{P}(G_{n(E)}' \textrm{ is a cluster in } G_{\omega}) \geq
e^{-b_{\beta}^{L}|G'_{n(E)}|}.
$$
 \item[3)] For every finite subgraph $G'$ of $G$ we have $\lambda^{N}(G') \geq \frac{1}{|G'|^{2}}$.
 \item[4)] For any linear graph $L_{n}$ in $\overline{G}$ (i.e.~a subgraph of $\overline{G}$ with $n+1$ vertices $v_{1},\dots,v_{n+1}$, 
such that $d(v_{i},v_{j})= |j-i|$ and edge set 
$\big\{[v_{i},v_{i+1}], 1 \leq i \leq n \big\}$) we have $\lambda^{N}(L_{n}) \leq \frac{12}{n^{2}}$.
\end{itemize}
From this results our claim follows like in the proof of Theorem~\ref{thm: main Neumann}. Statements 1) and 4) are proven in the same way as propositions~\ref{prp: abstract upper bounds for IDS} and~\ref{prop: upper neum eigen bounds}. 
For statement 3) see Proposition \ref{prop: cheeger}. 
As for Statement 2), the proof proceeds along the same lines as the proof of Proposition~\ref{prp: abstract lower bounds for IDS}. 
However, to bound from below the probability that $G_{n(E)}'$ is a cluster in $G_{\omega}$, we need the additional condition 
that there are no edges longer than $k$ in $G_n'$ and the following lemma.
\end{proof}

\begin{lemma}\label{lemma: probability long range}
For an arbitrary $k \in \mathbb{N}$ and an arbitrary positive $\beta$ there exists a positive constant $\varsigma_{k}$ such that the following statement is true:\\
For every connected subgraph $G'$ of $\overline{G}$ such that the distance between any two directly connected vertices in $G'$ is less or equal than $k$ we have
\begin{equation}\label{eq: probability long range}
\mathbb{P}(G' \textrm{ is a cluster of } G_{\omega}) \geq e^{-\varsigma_{k}|G'|}
\end{equation}
\end{lemma}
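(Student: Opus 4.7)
The plan is to decompose the event that $G'$ is a cluster of $G_{\omega}$ into three independent sub-events on disjoint sets of edges of $\overline{G}$: (i) every edge of $G'$ is open; (ii) every pair $[x,y]$ of vertices of $V(G')$ which is not an edge of $G'$ is closed; (iii) every edge of $\overline{G}$ with exactly one endpoint in $V(G')$ is closed. The probability in question equals the product of the probabilities of (i)-(iii), and each will be bounded below by a term of the form $e^{-c|G'|}$.

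For (i) I would exploit the hypothesis $d(x,y) \leq k$ for every edge $[x,y]$ of $G'$ together with the quasi-transitivity of $G$. Pairs of vertices in $G$ at distance at most $k$ decompose into finitely many $\Gamma$-orbits, so the values $J_{[x,y]}$ restricted to such pairs form a finite set of positive numbers, and there is a lower bound $p_k := 1-e^{-\beta J_{\min,k}}>0$ for the probability that an individual edge of $G'$ is open. The number of edges of $G'$ incident to any given vertex is bounded by the number $C_k$ of vertices in a ball of radius $k$ in $G$, which is uniform in the base-point by quasi-transitivity. Hence $|E(G')| \leq \tfrac{1}{2}C_k |G'|$ and the probability of (i) is at least $p_k^{C_k|G'|/2}$.

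For (ii) and (iii), the crucial observation is that the combined event is that all edges of $\overline{G}$ touching $V(G')$ \emph{and not belonging to $E(G')$} are closed. Using independence,
\[
\mathbb{P}(\text{(ii) and (iii)}) \;\geq\; \prod_{\substack{[x,y]:\, \{x,y\}\cap V(G')\neq\emptyset}} e^{-\beta J_{[x,y]}}
\;=\; \exp\!\Big(-\beta \sum_{[x,y]:\,\{x,y\}\cap V(G')\neq\emptyset} J_{[x,y]}\Big),
\]
and the sum is bounded above by $\sum_{x\in V(G')} J_x \leq J\,|G'|$, so this probability is at least $e^{-\beta J |G'|}$. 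Combining this with the bound for (i) and setting $\varsigma_k := \tfrac{1}{2}C_k\log(1/p_k) + \beta J$ yields the claimed inequality.

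The only subtlety I anticipate is the quasi-transitivity argument used to obtain the positive constants $p_k$ and $C_k$; everything else is bookkeeping with independence and the summability assumption $J_x \leq J < \infty$. Note that no hypothesis on $\beta$ being subcritical is used — the lemma is purely a lower bound and is valid for every positive $\beta$, just as in the nearest-neighbour analogue in the proof of Proposition~\ref{prp: abstract lower bounds for IDS}.
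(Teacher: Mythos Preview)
Your proof is correct and follows essentially the same approach as the paper: both arguments factor the event into ``all edges of $G'$ are open'' and ``all other edges touching $V(G')$ are closed'', bound the first factor using that only finitely many values $J_{[x,y]}$ occur for $d(x,y)\le k$ (by quasi-transitivity), and bound the second factor via $\prod e^{-\beta J_e}\ge e^{-\beta J|G'|}$. The paper organises the same computation vertex-by-vertex, writing $1-e^{-\beta J_e}=e^{-\beta J_e}(e^{\beta J_e}-1)$ so that the closed-edge and open-edge exponentials combine into a single factor $e^{-\beta J}$ per vertex, but the content is identical.
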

\begin{proof}
Let $G'$ be an arbitrary subgraph which satisfies the assumptions of the lemma. We partition the set of edges 
in $G$ adjacent to $x$ into two disjoint subsets: in $I_{x}$ we put those which are edges of $G'$ and in $O_{x}$ others.
It is clear that the probability $\mathbb{P}(G' \textrm{ is a cluster of } G_{\omega})$ can be estimated 
by the product of the probability that all edges in $I_{x}$, $x \in G'$ are open and the probability that all edges in $O_{x}$, $x \in G'$ are closed. Therefore we can write
\begin{align*}
\mathbb{P}(G' \textrm{ is a cluster of } G) &= \mathbb{P}(\bigcap_{x \in G'}\bigcap_{e \in I_{x}}\left\{e \textrm{ is open}\right\}) \mathbb{P}(\bigcap_{x \in G'}\bigcap_{e \in O_{x}}\left\{e \textrm{ is closed}\right\}) \\ & \geq \prod_{x \in G'}\Big(\prod_{e \in I_{x}}\mathbb{P}(e \textrm{ is open})\prod_{e \in O_{x}}\mathbb{P}(e \textrm{ is closed})\Big) \\ & = \prod_{x \in G'}\Big(e^{-\beta J}\prod_{e \in I_{x}}(e^{\beta J_{e}}-1)\Big) \\ & \geq (e^{-\beta J}c)^{|G'|}.
\end{align*}
Here $c$ is defined as $\displaystyle c:=\min_{x \in \mathcal{F}}\min_{A; A \subset (B(x,k)\backslash \left\{x\right\})} \prod_{y \in A}(e^{\beta J_{[x,y]}}-1)$, where $B(x,k)$ is a ball of radius $k$ around $x$. Obviously $c$ is positive and because of the invariance of the parameters $J_{[x,y]}$ under the automorphisms of $G$ we have $\displaystyle \prod_{e \in I_{x}}(e^{\beta J_{e}}-1) \geq c$, for all $x \in G'$. Since the constant $c$ does not depend on $G'$ the claim follows.
\end{proof}

\subsection{An abstract result}
\label{ss-abstract-result}
We have encountered the phenomenon, that in the case of the combinatorial Laplacian
the low energy asymptotics is independent of the volume growth behaviour of the graph.
This is consistent with the results on {E}rd{\" o}s-{R}\'enyi random graphs obtained in \cite{KhorunzhiyKM-06}. 

In the following we present an abstract result which tries to capture this phenomenon
and to single out properties which the stochastic process which generates the random graphs
needs to satisfy to obtain a low energy asymptotics as in Theorem \ref{thm: main Neumann}.

Let $G=(V, E)$ be a graph with countable vertex set $V$ and vertex degree bounded by $\tilde k$. 
Let an independent (site or bond) percolation process on $G$ be given and denote the percolation 
subgraph of $G$ associated to the configuration $\omega\in\Omega$ by $G_\omega$.
Fix a finite subset $\cF$ of $V$ and assume that there exists a
doubly infinite path $\mathfrak{P}$ in $G$ which contains a vertex $o\in\cF$. 
In other words $\mathfrak{P}\colon \ZZ\to V$ is injective and contains $o$ in its image.
Denote by $H^{\scriptscriptstyle N}_\omega$
the combinatorial Laplacian on $G_\omega$ and define the monotone function
\[
N^{\scriptscriptstyle N}(E):= |\cF|^{-1} \EE \left \{ \Tr [\chi_\cF \ \chi_{]-\infty,E]}(H^{\scriptscriptstyle N}_\omega)] \right \}
\]
which in many situations can be interpreted as the IDS. Assume that 
there is no infinite cluster in the graph $G_\omega$ almost surely
and that the cluster size distribution 
decays exponentially, more precisely
\begin{equation}
\label{e-exponential-decay} 
 \PP\{ |C_x(\omega)|\ge n\} \le e^{-a n}
\end{equation}
for some $a>0$ and all $x \in \cF$.
In the case of site percolation assume furthermore that 
$p_a:=\inf_{x\in V} \PP\{x \text{ is open} \}$  and $p_d:=\inf_{x\in V} \PP\{x \text{ is closed} \}$  
are strictly positive. Similarly in the case of bond percolation assume that 
$\inf_{e\in E} \PP\{e \text{ is open} \}$  and $\inf_{e\in E} \PP\{e \text{ is closed} \}$  
are strictly positive. 

\begin{theorem}
Assume the setting described in this paragraph.
Then there exist constants $\alpha_-, \alpha_+>0$ such that for all $E>0$ sufficiently small
\begin{equation}
e^{-\alpha_{-}E^{-1/2}} \leq N^{\scriptscriptstyle N}(E)-N^{\scriptscriptstyle N}(0) \leq e^{-\alpha_{+} E^{-1/2}}.
\end{equation}
 \end{theorem}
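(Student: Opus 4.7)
My plan is to adapt the strategy of Theorem~\ref{thm: main Neumann}, combining the Cheeger-type eigenvalue bound of Proposition~\ref{prop: cheeger} and the linear-subgraph upper bound of Proposition~\ref{prop: upper neum eigen bounds} (both are general facts about finite graphs that transfer to the present setting without change) with the abstract IDS bounds of Section~\ref{s-abstract-bounds}. The upper bound carries over almost verbatim. For the lower bound, however, the proof of Proposition~\ref{prp: abstract lower bounds for IDS} uses a cofinite group action that is no longer available, so this step must be redone using the doubly infinite path $\mathfrak{P}$ as a substitute for ambient symmetry.

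For the upper bound I would first verify that the cluster decomposition
\[
N^{\scriptscriptstyle N}(E) - N^{\scriptscriptstyle N}(0) = |\cF|^{-1} \sum_{x \in \cF} \EE\bigl[\la \delta_x, \chi_{]0,E]}(H^{\scriptscriptstyle N}(C_x(\omega)))\delta_x\ra\bigr]
\]
holds for the same reason as in the proof of Proposition~\ref{prp: abstract upper bounds for IDS}, since $H^{\scriptscriptstyle N}_\omega$ preserves every cluster. The Cheeger bound $\lambda^{\scriptscriptstyle N}(G') \geq 2/|G'|^2$ then forces the diagonal matrix element to vanish unless $|C_x(\omega)| \geq \sqrt{2/E}$, and combining this with the hypothesis \eqref{e-exponential-decay} gives $N^{\scriptscriptstyle N}(E) - N^{\scriptscriptstyle N}(0) \leq e^{-a\sqrt{2}\,E^{-1/2}}$, which is the required upper bound with $\alpha_+ = a\sqrt{2}$.

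For the lower bound, set $m = m(E) := \lceil \sqrt{12/E}\,\rceil$, so that $\lambda^{\scriptscriptstyle N}(L_m) \leq 12/m^2 \leq E$ by Proposition~\ref{prop: upper neum eigen bounds}. For each $j \in \{0,1,\dots,m\}$ let $L_m^{(j)} := \mathfrak{P}([-j, m-j])$ be the length-$m$ linear subgraph of $\mathfrak{P}$ in which $o = \mathfrak{P}(0)$ sits at position $j$. The events $\{C_o(\omega) = L_m^{(j)}\}$ are pairwise disjoint, and a short edge-by-edge (or site-by-site) count, using that the at most $(m+1)\tilde k$ boundary edges/sites must be in the ``wrong'' state, yields a uniform lower bound $\PP(C_o(\omega) = L_m^{(j)}) \geq e^{-cm}$ coming from the positivity of $p_a, p_d$ in the site case or their bond analogues and from the bounded-degree assumption. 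On each such event the operator $H^{\scriptscriptstyle N}(C_o(\omega))$ is unitarily equivalent to the standard line Laplacian $H^{\scriptscriptstyle N}(L_m)$ via the identification $\delta_{\mathfrak{P}(-j+k)} \mapsto \delta_k$, under which $\delta_o \mapsto \delta_j$. Retaining only the $x = o$ summand of the cluster decomposition, I would then estimate
\[
N^{\scriptscriptstyle N}(E) - N^{\scriptscriptstyle N}(0)
\geq |\cF|^{-1} e^{-cm} \sum_{j=0}^{m} \la \delta_j, \chi_{]0,E]}(H^{\scriptscriptstyle N}(L_m))\delta_j \ra
= |\cF|^{-1} e^{-cm}\, \Tr\bigl[\chi_{]0,E]}(H^{\scriptscriptstyle N}(L_m))\bigr] \geq |\cF|^{-1} e^{-cm},
\]
the last inequality because $\lambda^{\scriptscriptstyle N}(L_m) \in (0,E]$ forces the rank of the projection to be at least one. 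This gives the desired $e^{-\alpha_- E^{-1/2}}$ for $E$ small enough.

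The main obstacle is precisely this lower bound. The conceptual point is that one does not actually need any ambient symmetry of $G$: sliding the window along the single doubly infinite path $\mathfrak{P}$ produces $m+1$ copies of the length-$m$ line Laplacian whose diagonal matrix elements at $o$ are the diagonal elements of one fixed operator $H^{\scriptscriptstyle N}(L_m)$ at the $m+1$ distinct vertices, and summing these recovers a full trace. This trick replaces the $\Gamma$-averaging of Proposition~\ref{prp: abstract lower bounds for IDS} using nothing more than a single geodesic line through the reference vertex.
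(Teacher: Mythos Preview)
Your proposal is correct and follows essentially the same approach as the paper's own proof. The upper bound is handled identically, and for the lower bound both you and the paper slide a length-$m$ segment of the path $\mathfrak{P}$ through the reference vertex $o$, bound the probability of each such cluster configuration below via independence and the positivity of $p_a,p_d$, and then sum the diagonal elements over the $m+1$ positions to recover (at least) one full eigenfunction norm; the only cosmetic difference is that you phrase the last step as $\Tr[\chi_{]0,E]}(H^{\scriptscriptstyle N}(L_m))]\ge 1$, while the paper picks a normalised eigenfunction $\phi_n$ and uses $\sum_j|\phi_n(j)|^2=1$.
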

\begin{proof}
For the proof of the upper bound one uses the same inequalities as in the proof of  
Proposition \ref{prp: abstract upper bounds for IDS}, together with the eigenvalue estimate in 
Proposition \ref{prop: cheeger} and the exponential decay assumption \eqref{e-exponential-decay}.

For the lower bound one uses that 
\begin{equation}
\label{e-7.3-lower} 
N^{\scriptscriptstyle N}(E)-N^{\scriptscriptstyle N}(0) 
\geq 
|\cF|^{-1} \sum_{j=0}^n \EE \left  \{ \chi_{\Omega_{n,j}} \la \delta_o, \chi_{]-\infty,E]}
(H^{\scriptscriptstyle N}_\omega) \delta_o \ra \right \}
\end{equation}
where $n$ is chosen such that $\frac{12}{n^2}\le E$ and 
$\Omega_{n,j} \subset \Omega$ denotes the set of configurations where 
the cluster $C_o(\omega)$ is a linear cluster $L_n$ and $o$ is the vertex 
at the $j^{\text{th}}$ position of $L_n$. By the assumption on the existence of the infinite path $\mathfrak{P}$
such configurations exist and by the independence assumption we estimate the probability of $\Omega_{n,j}$ from below
by $p_a^n \cdot p_d^{\tilde k n}$.

Now let $\phi_n$ be a normalised  eigenfunction associated to the eigenvalue $\lambda^{\scriptscriptstyle N}(L_n) \le \frac{12}{n^2}$.
Since
\begin{align*}
\sum_{j=0}^n \EE \left  \{ \chi_{\Omega_{n,j}} \la \delta_o, \chi_{]-\infty,E]}
(H^{\scriptscriptstyle N}_\omega) \delta_o \ra \right \}
&\ge
\sum_{j=0}^n \EE \left  \{ \chi_{\Omega_{n,j}}  |\phi_n(o)|^2 \right \} 
\\&=
\sum_{j=0}^n \EE \left  \{ \chi_{\Omega_{n,j}}  |\phi_n(j)|^2 \right \} 
\end{align*}
we have $N^{\scriptscriptstyle N}(E)-N^{\scriptscriptstyle N}(0) \geq |\cF|^{-1}   p_a^n \cdot p_d^{\tilde k n}$.
This completes the proof.
\end{proof}

\def\cprime{$'$}\def\polhk#1{\setbox0=\hbox{#1}{\ooalign{\hidewidth
  \lower1.5ex\hbox{`}\hidewidth\crcr\unhbox0}}}

\end{document}